\newtheorem{lemma}{Lemma}
\newtheorem{question}{Question}
\newtheorem{theorem}{Theorem}
\newtheorem{corollary}{Corollary}
\newtheorem{conjecture}{Conjecture}
\newtheorem{remark}{Remark}
\def\bl{\begin{lemma}}
\def\bt{\begin{theorem}}
\def\el{\end{lemma}}
\def\et{\end{theorem}}
\def\bp{\begin{proof}}
\def\ep{\end{proof}}
\def\bc{\begin{corollary}}
\def\ec{\end{corollary}}
\def\bc{\begin{remark}}
\def\ec{\end{remark}}
\subjclass[2000]{30H10, 30H25, 30H35, 46E15, 47B33} \keywords{Composition operator, boundedness, compactness, analytic Campanato space}
\begin{document}

\title[Composition Operators between Analytic Campanato Spaces]{Composition Operators between Analytic Campanato Spaces}
\author{Jie Xiao and Wen Xu}
\address{Department of Mathematics \& Statistics,
         Memorial University,
         NL A1C 5S7, Canada}
         \email{jxiao@mun.ca; wenxupine@gmail.com}
\thanks{JX and WX were in part supported by NSERC of Canada and the Finnish Cultural Foundation, respectively.}

\begin{abstract}
This note characterizes both boundedness and compactness of a composition operator between any two analytic Campanato spaces on the unit complex disk.
\end{abstract}
 \maketitle
\section{Introduction}\label{s1}
\setcounter{equation}{0}

  On the basis of the works: \cite{Smith}, \cite{Jar}, \cite{Lait1, Lait2}, \cite{LaitNST}, \cite{Wulan}, \cite{WulanZZ}, \cite{Tjani}, \cite{BourdonCM}, \cite{Madigan, MadiganM}, \cite{Roan}, \cite{GauthierX}, \cite{Xiao, Xiao1, Xiao2}, \cite{Shapiro1, Shapiro} and \cite{CaFO}, we consider an unsolved fundamental problem in the function-theoretic operator theory, i.e., the so-called composition operator question for the analytic Campanato spaces:

 \begin{question}\label{q} Let $\phi$ be an analytic self-map of $\mathbb D$ and $-\infty<p,q<\infty$. What finite (resp. vanishing) property must $\phi$ have in order that $C_\phi$ is bounded (resp. compact) between $\mathcal{CA}_p$ and $\mathcal{CA}_q$ ?
 \end{question}

In the above and below, $\mathbb D$ and $\mathbb T$ respectively represent the unit disk and the unit circle in the finite complex plane $\mathbb C$, $C_\phi f=f\circ\phi$ is the composition of an analytic function $f$ on $\mathbb D$ with $\phi$, and for $p\in (-\infty,\infty)$, and $\mathcal{CA}_p$ denotes the so-called Campanato space of all analytic functions $f: \mathbb D\to\mathbb C$ with radial boundary values $f$ on $\mathbb T$ satisfying
$$
\|f\|_{\mathcal{CA}_p}=\sup_{I\subseteq\mathbb T}\sqrt{|I|^{-p}\int_I|f(\xi)-f_I|^2|d\xi|}<\infty
$$
where the supremum is taken over all sub-arcs $I\subseteq\mathbb T$ with $|I|$ being their arc-lengths, and
$$
|d\zeta|=|de^{i\theta}|=d\theta;\quad f_I=|I|^{-1}\int_I f(\xi)\,|d\xi|.
$$
Neededless to say, $\|\cdot\|_{\mathcal{CA}_p}$ cannot distinguish between any two $\mathcal{CA}_p$ functions differing by a constant, but $|f(0)|+\|\cdot\|_{\mathcal{CA}_p}$ defines a norm so that $\mathcal{CA}_p$ is a Banach space. Here, it is perhaps appropriate to mention the following table which helps us get a better understanding of the structure of $\mathcal{CA}_p$ (see, e.g. \cite[pp. 67-75]{Giaq} and \cite[p. 52]{Xiao2}):

\begin{center}
    \begin{tabular}{ | l | p{7cm} |}
    \hline
    Index $p$ & Analytic Campanato Space $\mathcal{CA}_p$\\ \hline
    $p\in (-\infty,0]$ & Analytic Hardy space $\mathcal H^2$ \\ \hline
    $p\in (0,1)$ & Holomorphic Morrey space $\mathcal H^{2,p}$\\ \hline
    $p=1$ & Analytic John-Nirenberg space $\mathcal{BMOA}$ \\ \hline
    $p\in (1,3]$ & Analytic Lipschitz space $\mathcal A_{\frac{p-1}{2}}$\\ \hline
    $p\in (3,\infty)$ & Complex constant space $\mathbb C$\\ \hline
\end{tabular}
\end{center}

An answer to the boundedness part of Question \ref{q} is the following result.

\begin{theorem}\label{tA} Let $\phi$ be an analytic self-map of $\mathbb D$ and $(p,q)\in [0,2)\times [0,2)$. Then $C_\phi: \mathcal{CA}_p\mapsto\mathcal{CA}_q$ is bounded if and only if

\begin{equation}\label{e1}
\sup_{a\in\mathbb D}\frac{(1-|a|^2)^{1-q}}{(1-|\phi(a)|^2)^{1-p}}\|\sigma_{\phi(a)}\circ\phi\circ\sigma_a\|_2^2<\infty,
\end{equation}
where
$$
\sigma_b(z)=\frac{b-z}{1-\bar{b}z}\quad\&\quad \|f\|_2=\sqrt{\int_{\mathbb T}|f(\xi)|^2|d\xi|}.
$$
\end{theorem}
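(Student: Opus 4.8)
The plan is to first replace the arc-length definition of $\|\cdot\|_{\mathcal{CA}_p}$ by a Möbius-invariant Dirichlet-type quantity, which is the real engine of the argument. Concretely, I would establish (as a preliminary lemma, valid precisely on the range $p\in[0,2)$) the equivalence
$$\|f\|_{\mathcal{CA}_p}^2\approx\sup_{a\in\mathbb D}(1-|a|^2)^{1-p}\int_{\mathbb D}|f'(z)|^2\bigl(1-|\sigma_a(z)|^2\bigr)\,dA(z),$$
equivalently the $p$-Carleson measure statement $\|f\|_{\mathcal{CA}_p}^2\approx\sup_{I}|I|^{-p}\int_{S(I)}|f'(z)|^2(1-|z|^2)\,dA(z)$ over Carleson boxes $S(I)$. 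For $p=1$ this is the classical Garsia/BMOA identity, and the passage between arcs and the Poisson-type factor $1-|\sigma_a(z)|^2$ is the change of variables $\xi=\sigma_a(\eta)$ on $\mathbb T$ together with the standard Littlewood--Paley identity; the restriction $p<2$ is exactly what keeps the relevant box sums summable. Granting this, the whole theorem becomes a statement comparing two suprema of Dirichlet integrals, one for $f$ on $\mathbb D$ and one for $f\circ\phi$.

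For the necessity direction I would test against an explicit family localized at $b=\phi(a)$. Writing $g(z,a)=\log\frac1{|\sigma_a(z)|}$, a direct computation gives $\|\sigma_b\|_{\mathcal{CA}_p}^2\approx(1-|b|^2)^{1-p}$ (again using $p<2$), so $f_b:=(1-|b|^2)^{(p-1)/2}\sigma_b$ has $\|f_b\|_{\mathcal{CA}_p}\approx1$. The point of this choice is that $f_b\circ\sigma_b=(1-|b|^2)^{(p-1)/2}\,\mathrm{id}$ has constant derivative, so after the two substitutions $z=\sigma_a(u)$ and $w=\phi(z)$ one finds
$$\|C_\phi f_b\|_{\mathcal{CA}_q}^2\gtrsim(1-|a|^2)^{1-q}\int_{\mathbb D}|(f_b\circ\phi)'(z)|^2 g(z,a)\,dA(z)\approx(1-|b|^2)^{p-1}(1-|a|^2)^{1-q}\,\|\sigma_{\phi(a)}\circ\phi\circ\sigma_a\|_2^2,$$
where I have used $\int_{\mathbb D}|\Phi'|^2 g(\cdot,0)\,dA\approx\|\Phi\|_2^2$ for $\Phi=\sigma_{\phi(a)}\circ\phi\circ\sigma_a$ (which fixes $0$). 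Since the left side is $\le\|C_\phi\|^2\|f_b\|_{\mathcal{CA}_p}^2\lesssim\|C_\phi\|^2$, taking the supremum over $a$ yields \eqref{e1}.

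For the sufficiency direction I would run the same substitutions in reverse. Using the Dirichlet characterization for $f\circ\phi$ and the non-univalent change of variables $w=\phi(z)$, the problem reduces to showing
$$\sup_{a\in\mathbb D}(1-|a|^2)^{1-q}\int_{\mathbb D}|f'(w)|^2 N_{\phi,a}(w)\,dA(w)\lesssim\|f\|_{\mathcal{CA}_p}^2,\qquad N_{\phi,a}(w)=\sum_{z\in\phi^{-1}(w)}g(z,a),$$
while the hypothesis \eqref{e1} is, after the same computation as above, exactly a bound on the total masses $(1-|a|^2)^{1-q}(1-|\phi(a)|^2)^{1+p}\int_{\mathbb D}|1-\overline{\phi(a)}w|^{-4}N_{\phi,a}(w)\,dA(w)$. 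I would then invoke the sub-mean-value property of the Nevanlinna-type counting function $N_{\phi,a}$ to dominate it pointwise by its averages over pseudohyperbolic disks, convert the target into a Carleson-box testing condition for the measure $N_{\phi,a}\,dA$, and match each box to a suitable parameter; here the coupling $b=\phi(a)$ causes no loss because $N_{\phi,a}$ is supported on $\overline{\phi(\mathbb D)}$, so only boxes meeting $\phi(\mathbb D)$ are tested. Combining the box estimates with the $p$-Carleson embedding $\int_{\mathbb D}|f'|^2\,d\mu\lesssim\|f\|_{\mathcal{CA}_p}^2\sup_I|I|^{-p}\mu(S(I))$ closes the estimate.

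The main obstacle is precisely this last step: passing from the single scalar hypothesis \eqref{e1} --- a bound on the globally averaged counting-function mass for each $a$ --- to the localized, box-by-box control needed for the Carleson embedding, uniformly in $a$ and in $f$. The sub-mean-value inequality for $N_{\phi,a}$ and the careful bookkeeping of the weight $|1-\overline{\phi(a)}w|^{-4}$ against the pseudohyperbolic geometry (so that local masses near $\phi(a)$ are correctly recovered from \eqref{e1}) is the technical heart, and it is also where the hypothesis $p,q<2$ is used a second time to guarantee the requisite summability.
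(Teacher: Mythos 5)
Your Dirichlet-type reformulation and your necessity argument are both correct, and they essentially coincide with the paper's own proof: the characterization $\|f\|_{\mathcal{CA}_p}^2\approx\sup_{a\in\mathbb D}(1-|a|^2)^{1-p}\int_{\mathbb D}|f'|^2(1-|\sigma_a|^2)\,dm$ is Lemma \ref{l21} combined with the Hardy--Littlewood identity (\ref{e4}); your test functions $(1-|b|^2)^{(p-1)/2}\sigma_b$ have (up to the harmless factor $|b|$) the same derivative modulus as the $f_b$ of Lemma \ref{l22}, and the uniform norm bound you assert for them is proved by the same Ortega--F\'abrega estimate; and your final computation via the substitution $z=\sigma_a(u)$ plus Littlewood--Paley is the paper's (\ref{e9})--(\ref{e10}) step. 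Your test function is in fact marginally cleaner: since $\sigma_b'$ never vanishes, you avoid the paper's separate treatment of $\{|\phi(a)|\le s\}$ via the identity map, which the paper needs only because its $f_b'$ degenerates when $\phi(a)=0$.

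The sufficiency half, however, contains a genuine gap, located exactly at the step you flag as the technical heart --- and the mechanism you propose there cannot be repaired as stated. First, the embedding you plan to finish with, $\int_{\mathbb D}|f'|^2\,d\mu\lesssim\|f\|_{\mathcal{CA}_p}^2\sup_I|I|^{-p}\mu(S(I))$, is false: for $p=1$ take $f(z)=\log\frac{1}{1-z}\in\mathcal{BMOA}$ and $\mu=\sum_k 2^{-k}\delta_{1-2^{-k}}$; then $\sup_I|I|^{-1}\mu(S(I))<\infty$ while $\int_{\mathbb D}|f'|^2\,d\mu=\sum_k 2^k=\infty$. Carleson-box conditions can control $\int|f|^2\,d\mu$ (that is Lemma \ref{l32} --- note: the function, not the derivative) or derivative integrals carrying a weight $(1-|z|^2)$, but never a bare $\int|f'|^2\,d\mu$. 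Second, the box control you can actually extract from (\ref{e1}) is too weak: writing $b=\phi(a)$ and $\psi_a=\sigma_b\circ\phi\circ\sigma_a$, one has $\|\psi_a\|_2^2\approx(1-|b|^2)^2\int_{\mathbb D}N_{\phi,a}(w)|1-\bar bw|^{-4}\,dm(w)$, so a box $S(I)$ centered at $\zeta_I$ only inherits $\int_{S(I)}N_{\phi,a}\,dm\lesssim(1-|b|^2)^{-2}\big(|1-\bar b\zeta_I|+|I|\big)^4\|\psi_a\|_2^2$, which does not decay in $|I|$ for boxes away from $b$ and hence is not even a Carleson-type bound there; and ``matching each box to a suitable parameter'' cannot fix this, because changing $a$ changes the measure $N_{\phi,a}$ itself, not merely the test. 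What actually rescues the region far from $\phi(a)$ is a pointwise, not box-averaged, upgrade: sub-mean-value gives $\sup_w|w|^2N(\psi_a,w)\lesssim\|\psi_a\|_2^2$, and Smith's lemma --- quoted as (\ref{esmith}) in the paper --- upgrades this to $N(\psi_a,w)\lesssim\|\psi_a\|_2^2\ln|w|^{-1}$ for $|w|\ge 1/2$, i.e. $N_{\phi,a}(w)\lesssim\|\psi_a\|_2^2\,N(\sigma_b,w)$; combined with the pointwise bound $|f'(w)|\lesssim\|f\|_{\mathcal{CA}_p,\ast}(1-|w|^2)^{(p-3)/2}$ from (\ref{e5}) on $\{|\sigma_b(w)|\le r\}$, this closes the estimate, and it is precisely the paper's route in Situation 2 of the compactness proof. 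For boundedness itself the paper bypasses counting functions entirely: it quotes Laitila's inequality (\ref{e8}), $\|g\circ\psi\|_2\lesssim\|g\|_2\|\psi\|_2$ for $g(0)=\psi(0)=0$, applies it to $g_a=f\circ\sigma_{\phi(a)}-f(\phi(a))$ and $\psi_a$, and is done in three lines. So either import (\ref{e8}), or replace your box-testing step by the Smith-lemma pointwise bound; as written, your plan does not go through.
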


It should be pointed out that (\ref{e1}) is not always true - in fact, we have the following consequence whose (i) with $p=q\in\{0,1\}$ and (ii) are well-known; see e.g. \cite{Madigan, Roan, Shapiro1, Shapiro, Smith, Xiao}.

\begin{corollary}\label{cor1} Let $\phi$ be an analytic self-map of $\mathbb D$. For $f\in\mathcal{H}^2$ and $p\in [0,2)$ set
$$
\|f\|_{\mathcal{CA}_p,\ast}=\sup_{a\in\mathbb D}(1-|a|^2)^{\frac{1-p}{2}}\|f\circ\sigma_a-f(a)\|_2.
$$
\item{\rm(i)} If $p\in [0,1]$ then $C_\phi: \mathcal{CA}_p\mapsto\mathcal{CA}_p$ is always bounded with
\begin{equation}\label{e1a}
\|C_\phi f\|_{\mathcal{CA}_p,\ast}\le \left(\frac{1+|\phi(0)|}{1-|\phi(0)|}\right)^{\frac{1-p}{2}}\|f\|_{\mathcal{CA}_p,\ast}.
\end{equation}

\item{\rm(ii)} If $p\in (1,2)$ then $C_\phi: \mathcal{CA}_p\mapsto\mathcal{CA}_p$ is bounded when and only when

\begin{equation}\label{e1b}
\sup_{a\in\mathbb D}\left(\frac{1-|a|^2}{1-|\phi(a)|^2}\right)^{\frac{3-p}{2}}|\phi'(a)|<\infty.
\end{equation}

\end{corollary}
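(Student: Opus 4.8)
The plan is to read off both parts from Theorem \ref{tA} specialized to $q=p$, in which case (\ref{e1}) collapses to
$$\sup_{a\in\mathbb D}\left(\frac{1-|a|^2}{1-|\phi(a)|^2}\right)^{1-p}\|\psi_a\|_2^2<\infty,\qquad \psi_a:=\sigma_{\phi(a)}\circ\phi\circ\sigma_a,$$
and to exploit the single structural fact that each $\psi_a$ is an analytic self-map of $\mathbb D$ with $\psi_a(0)=\sigma_{\phi(a)}(\phi(a))=0$.

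For part (i) I would work directly with the Möbius-invariant seminorm $\|\cdot\|_{\mathcal{CA}_p,\ast}$ (equivalent to the defining one). Since $\sigma_{\phi(a)}$ is an involution we have $\phi\circ\sigma_a=\sigma_{\phi(a)}\circ\psi_a$, so setting $G_a:=f\circ\sigma_{\phi(a)}-f(\phi(a))$ yields the pointwise identity $(C_\phi f)\circ\sigma_a-(C_\phi f)(a)=G_a\circ\psi_a$ with $G_a(0)=0$. As $\mathcal{CA}_p\subseteq\mathcal H^2$ in this range, Littlewood's subordination theorem applies to the origin-fixing self-map $\psi_a$ and gives $\|G_a\circ\psi_a\|_2\le\|G_a\|_2$. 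Evaluating the definition of $\|f\|_{\mathcal{CA}_p,\ast}$ at the point $\phi(a)$ bounds $\|G_a\|_2$ by $(1-|\phi(a)|^2)^{-(1-p)/2}\|f\|_{\mathcal{CA}_p,\ast}$, whence
$$\|C_\phi f\|_{\mathcal{CA}_p,\ast}\le\left(\sup_{a\in\mathbb D}\frac{1-|a|^2}{1-|\phi(a)|^2}\right)^{\frac{1-p}{2}}\|f\|_{\mathcal{CA}_p,\ast}.$$
Factoring $\phi=\sigma_{\phi(0)}\circ(\sigma_{\phi(0)}\circ\phi)$ and applying Schwarz's lemma to $\sigma_{\phi(0)}\circ\phi$ gives the uniform bound $\frac{1-|a|^2}{1-|\phi(a)|^2}\le\frac{1+|\phi(0)|}{1-|\phi(0)|}$; because $1-p\ge0$ precisely when $p\le1$, raising to the power $(1-p)/2$ produces exactly (\ref{e1a}).

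For part (ii) the necessity is immediate from Theorem \ref{tA}: expanding $\psi_a(z)=\sum_{n\ge1}c_nz^n$ gives $\|\psi_a\|_2^2=2\pi\sum_n|c_n|^2\ge2\pi|c_1|^2=2\pi|\psi_a'(0)|^2$, while a chain-rule computation yields $|\psi_a'(0)|=\frac{(1-|a|^2)|\phi'(a)|}{1-|\phi(a)|^2}$; substituting, finiteness of the displayed quantity forces $\sup_a\big((1-|a|^2)/(1-|\phi(a)|^2)\big)^{3-p}|\phi'(a)|^2<\infty$, which is (\ref{e1b}). For the converse I would deliberately not attempt to bound $\|\psi_a\|_2^2$ from above through Theorem \ref{tA}. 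Instead, for $p\in(1,2)$ I would pass to the identification $\mathcal{CA}_p=\mathcal A_\alpha$, $\alpha=\frac{p-1}{2}$, from the table and use the standard derivative description $\|f\|_{\mathcal A_\alpha}\asymp\sup_z(1-|z|^2)^{1-\alpha}|f'(z)|$. Then the chain rule $(f\circ\phi)'=(f'\circ\phi)\,\phi'$ together with $|f'(\phi(z))|\le\|f\|_{\mathcal A_\alpha}(1-|\phi(z)|^2)^{-(1-\alpha)}$ gives, since $1-\alpha=\frac{3-p}{2}$,
$$\|C_\phi f\|_{\mathcal A_\alpha}\le\|f\|_{\mathcal A_\alpha}\sup_{z\in\mathbb D}\left(\frac{1-|z|^2}{1-|\phi(z)|^2}\right)^{\frac{3-p}{2}}|\phi'(z)|,$$
so that (\ref{e1b}) already suffices for boundedness.

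I expect the main obstacle to be exactly this mismatch in part (ii): Theorem \ref{tA} encodes boundedness as an $L^2$-integral condition on the localized symbols $\psi_a$, whereas (\ref{e1b}) is a first-order pointwise condition on $\phi$. The elementary inequality $\|\psi_a\|_2^2\ge2\pi|\psi_a'(0)|^2$ delivers only the necessity direction; it cannot be reversed at the level of arbitrary origin-fixing self-maps (for instance $z\mapsto z^2$ has vanishing first derivative yet $\|\cdot\|_2^2=2\pi$), so for sufficiency one must abandon (\ref{e1}) and invoke the first-derivative characterization of $\mathcal{CA}_p$ available only in the Lipschitz range $p>1$.
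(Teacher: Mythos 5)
Your proposal is correct, and while it follows the paper's overall skeleton, it differs in two genuinely useful places. For part (i), the paper uses the same decomposition $(C_\phi f)\circ\sigma_a-(C_\phi f)(a)=g_a\circ\psi_a$ with $\psi_a=\sigma_{\phi(a)}\circ\phi\circ\sigma_a$ and the same appeal to Littlewood subordination, but it establishes (\ref{e1a}) in two stages: first the case $\phi(0)=0$, where Schwarz's lemma gives $\sup_{a}(1-|a|^2)/(1-|\phi(a)|^2)\le 1$, and then the general case by factoring $\phi=\sigma_b\circ\psi$ with $b=\phi(0)$, $\psi=\sigma_b\circ\phi$, and separately estimating $\|C_{\sigma_b}f\|_{\mathcal{CA}_p,\ast}$ through a Möbius-bookkeeping computation (the $\lambda$, $c$ identities). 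You collapse these two stages into one by inserting the quantitative Schwarz--Pick bound $\frac{1-|a|^2}{1-|\phi(a)|^2}\le\frac{1+|\phi(0)|}{1-|\phi(0)|}$ directly inside the supremum; since the map $t\mapsto t^{(1-p)/2}$ is nondecreasing exactly when $p\le 1$, this yields (\ref{e1a}) with the same constant and avoids the paper's most delicate step. For part (ii), your necessity argument --- $\|\psi_a\|_2^2\ge 2\pi|\psi_a'(0)|^2$ together with $|\psi_a'(0)|=(1-|a|^2)|\phi'(a)|/(1-|\phi(a)|^2)$ --- is precisely the paper's use of its inequality (\ref{e5}), just phrased through Fourier coefficients rather than through the derivative bound for origin-vanishing $H^2$ functions. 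For sufficiency, the paper simply cites Madigan's Theorem A for boundedness of $C_\phi$ on $\mathcal{A}_{\frac{p-1}{2}}$, whereas you reprove that direction from scratch via the Hardy--Littlewood derivative characterization $\|f\|_{\mathcal A_\alpha}\approx\sup_z(1-|z|^2)^{1-\alpha}|f'(z)|$ and the chain rule; this is self-contained, at the cost of invoking (correctly) that characterization and the identification $\mathcal{CA}_p=\mathcal A_{\frac{p-1}{2}}$ for $p\in(1,2)$, which the paper's Lemma \ref{l21} (Case 3) supplies. Your closing observation is also sound: condition (\ref{e1}) with $p=q$ and condition (\ref{e1b}) are equivalent only through the theorems themselves, not by reversing the coefficient inequality, which is exactly why both you and the paper route sufficiency through the Lipschitz space rather than through (\ref{e1}).
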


Below is a partial answer to the compactness part of Question \ref{q}.

\begin{theorem}\label{tB} Let $\phi$ be an analytic self-map of $\mathbb D$ and $(p,q)\in [0,2)\times[0,2)$. If $C_\phi: \mathcal{CA}_p\mapsto\mathcal{CA}_q$ is compact then (\ref{e1}) holds and
\begin{equation}\label{e2}
\lim_{|\phi(a)|\to 1}\frac{(1-|a|^2)^{1-q}}{(1-|\phi(a)|^2)^{1-p}}\|\sigma_{\phi(a)}\circ\phi\circ\sigma_a\|_2^2=0.
\end{equation}
Conversely, if (\ref{e1}) holds and (\ref{e2}) is valid for $(p,q)\in [0,2)\times[1,1]\cup (1,2)\times [0,2)$ then $C_\phi: \mathcal{CA}_p\mapsto\mathcal{CA}_q$ is compact.
\end{theorem}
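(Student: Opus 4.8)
The plan is to carry everything out in terms of the Möbius–invariant seminorm $\|\cdot\|_{\mathcal{CA}_p,\ast}$ of Corollary \ref{cor1} (comparable to the $\mathcal{CA}_p$–seminorm), together with the elementary but decisive identity
\[
\big\|(C_\phi f)\circ\sigma_a-(C_\phi f)(a)\big\|_2^2=\big\|(f\circ\sigma_{\phi(a)})\circ\psi_a-(f\circ\sigma_{\phi(a)})(0)\big\|_2^2,\qquad \psi_a:=\sigma_{\phi(a)}\circ\phi\circ\sigma_a,
\]
where $\psi_a(0)=0$ and $\|\psi_a\|_2=\|\sigma_{\phi(a)}\circ\phi\circ\sigma_a\|_2$ is exactly the factor appearing in (\ref{e1})–(\ref{e2}). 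I would use the sequential criterion: $C_\phi$ is compact iff $\|C_\phi f_n\|_{\mathcal{CA}_q}\to0$ for every $\{f_n\}$ bounded in $\mathcal{CA}_p$ with $f_n\to0$ uniformly on compact subsets of $\mathbb D$. I also record the pointwise estimate underlying Theorem \ref{tA},
\[
(1-|a|^2)^{1-q}\big\|f\circ\phi\circ\sigma_a-f(\phi(a))\big\|_2^2\lesssim \|f\|_{\mathcal{CA}_p}^2\,\frac{(1-|a|^2)^{1-q}}{(1-|\phi(a)|^2)^{1-p}}\,\|\psi_a\|_2^2=:\|f\|_{\mathcal{CA}_p}^2\,\Lambda(a),
\]
which is the engine for the estimates below.

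For necessity, compactness forces boundedness, so (\ref{e1}) is immediate from Theorem \ref{tA}. For (\ref{e2}) I would take any $a_n$ with $|\phi(a_n)|\to1$, set $b_n=\phi(a_n)$, and test against the normalised functions $f_n=(\sigma_{b_n}-b_n)/\|\sigma_{b_n}\|_{\mathcal{CA}_p}$. A short computation gives $\|f_n\|_{\mathcal{CA}_p}\asymp1$, $f_n(0)=0$, and $f_n\to0$ uniformly on compacta, so $\{f_n\}$ is admissible. Since $\sigma_{b_n}\circ\sigma_{b_n}=\mathrm{id}$, evaluating the $\ast$–seminorm of $C_\phi f_n$ at the single point $a_n$ collapses the identity above to $\|\psi_{a_n}\|_2/\|\sigma_{b_n}\|_{\mathcal{CA}_p}$; using $\|\sigma_b\|_{\mathcal{CA}_p}^2\lesssim(1-|b|^2)^{1-p}$ this yields
\[
\|C_\phi f_n\|_{\mathcal{CA}_q}^2\gtrsim(1-|a_n|^2)^{1-q}\frac{\|\psi_{a_n}\|_2^2}{\|\sigma_{b_n}\|_{\mathcal{CA}_p}^2}\gtrsim\Lambda(a_n).
\]
Compactness sends the left side to $0$, and since the sequence was arbitrary this is precisely (\ref{e2}).

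For the converse I would verify the sequential criterion directly. Fix $\{f_n\}$ bounded in $\mathcal{CA}_p$ with $f_n\to0$ locally uniformly, and split $\|C_\phi f_n\|_{\mathcal{CA}_q,\ast}^2=\sup_a(1-|a|^2)^{1-q}\|f_n\circ\phi\circ\sigma_a-f_n(\phi(a))\|_2^2$ according to $|\phi(a)|>r$ or $|\phi(a)|\le r$. On $\{|\phi(a)|>r\}$ the pointwise estimate and (\ref{e2}) give a bound $\lesssim\|f_n\|_{\mathcal{CA}_p}^2\sup_{|\phi(a)|>r}\Lambda(a)<\varepsilon$, uniformly in $n$, once $r$ is close to $1$. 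On $\{|\phi(a)|\le r\}$ the factor $(1-|\phi(a)|^2)^{1-p}$ is bounded above and below, so here I must extract smallness from the local uniform decay of $f_n$; passing through the identity above with $g_n=f_n\circ\sigma_{\phi(a)}$ (uniformly bounded in $\mathcal{CA}_p$ and locally uniformly null, since $|\phi(a)|\le r$) reduces matters to controlling $(1-|a|^2)^{1-q}\|g_n\circ\psi_a-g_n(0)\|_2^2$.

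The main obstacle is precisely this last quantity in the regime $|a|\to1$ with $|\phi(a)|\le r$: the weight $(1-|a|^2)^{1-q}$ blows up when $q>1$, whereas the only smallness at hand comes from local uniform convergence of $f_n$, and Littlewood subordination only bounds $\|g_n\circ\psi_a-g_n(0)\|_2$ by $\|g_n-g_n(0)\|_2$, which need not be small (witness $z^n$). This is exactly where the restriction $(p,q)\in[0,2)\times\{1\}\cup(1,2)\times[0,2)$ enters, via two distinct mechanisms. When $q=1$ no weight is present, and I would recast the estimate through the vanishing–Carleson–measure description of compactness into $\mathcal{BMOA}$: the Carleson mass of $|f_n'(\phi(z))|^2|\phi'(z)|^2(1-|z|^2)\,dA(z)$ splits into a boundary part handled by (\ref{e2}) and a part carried by compact subsets, killed by local uniform decay of $f_n'$. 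When $p\in(1,2)$, the source $\mathcal{CA}_p$ is the Lipschitz space $\mathcal A_{(p-1)/2}$, and I would use the derivative bound $|f_n'(z)|\lesssim\|f_n\|_{\mathcal{CA}_p}(1-|z|^2)^{(p-3)/2}$ inside the Littlewood–Paley representation $\|g_n\circ\psi_a-g_n(0)\|_2^2\asymp\int_{\mathbb D}|g_n'(\psi_a(w))|^2|\psi_a'(w)|^2(1-|w|^2)\,dA(w)$ to absorb the divergent weight into the strong boundary decay of $g_n'$. Making this absorption uniform in $n$ and in $a$ is the technical heart of the argument, and I expect it to be the step requiring the most care; it is also the reason the remaining cases $p\in[0,1]$, $q\neq1$ are left open.
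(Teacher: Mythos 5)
Your necessity argument is correct, and is in fact a cleaner route than the paper's: instead of pushing the test functions $f_b(z)=(1-|b|^2)^{(1+p)/2}/(1-\bar bz)$ through the counting-function (``$\mathsf{C}$-part'') estimate of the proof of Theorem \ref{tA}, your normalized functions $(\sigma_{b_n}-b_n)/\|\sigma_{b_n}\|_{\mathcal{CA}_p,\ast}$ collapse exactly, because $\sigma_{b_n}\circ\phi\circ\sigma_{a_n}-\sigma_{b_n}(\phi(a_n))=\psi_{a_n}$ and $\|\sigma_{b}\|_{\mathcal{CA}_p,\ast}^2\approx(1-|b|^2)^{1-p}$ for $p\in[0,2)$. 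That half stands.

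The sufficiency half, however, has a genuine gap, and it sits exactly where you defer: the regime $|\phi(a)|\le s$ with $|a|\to 1$. The obstruction is that $\psi_a=\sigma_{\phi(a)}\circ\phi\circ\sigma_a$ may still carry a set of $\xi\in\mathbb T$ of substantial measure close to $\mathbb T$ even though $\phi(a)$ stays in a fixed compact set, while the hypothesis (\ref{e2}) speaks only about points $a$ with $|\phi(a)|\to1$; nothing in your outline explains how (\ref{e2}) reaches into this regime. For $q=1$, your phrase ``a boundary part handled by (\ref{e2})'' is precisely the assertion that requires proof: it is the content of the paper's Lemma \ref{l31}, which shows that (\ref{e2}) forces
$$
\lim_{t\to1}\sup_{|\phi(a)|\le s}\frac{(1-|a|^2)^{1-q}}{(1-|\phi(a)|^2)^{1-p}}\,\big|\{\xi\in\mathbb T:\ |\psi_a(\xi)|>t\}\big|=0,
$$
and whose proof is a contradiction argument built on the covering lemma (\ref{e4e}) of Laitila--Nieminen--Saksman--Tylli: from a point of the exceptional set one moves radially inward to a point $a_j$ where $|\phi\circ\sigma_{b_j}(a_j)|=s$ exactly, and the covering lemma guarantees that the arc $J(a_j)$ still meets the exceptional set in a fixed proportion, contradicting (\ref{e2}). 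This mismatch---a hypothesis at $|\phi(a)|\to1$ versus boundary concentration of $\phi\circ\sigma_a$ while $|\phi(a)|$ stays small---is exactly why compactness on $\mathcal{BMOA}$ resisted solution until \cite{Smith,LaitNST,WulanZZ}; it cannot be waved through. Moreover, even once the exceptional set is small in measure, one needs higher integrability to exploit that smallness: the paper uses the $L^4$ estimate of Lemma \ref{l32} (a Carleson-measure embedding) together with Cauchy--Schwarz.

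The case $p\in(1,2)$ suffers from the same omission, and ``absorbing the divergent weight into the boundary decay of $g_n'$'' does not close it: the Lipschitz derivative bound gives uniform \emph{boundedness} of the tail integrals, not smallness, and local uniform convergence of $f_n$ gives no uniform-in-$n$ control near $\mathbb T$. The paper's actual mechanism is the Nevanlinna counting function: Smith's sub-mean-value property (\ref{esmith}) converts (\ref{e1}) and (\ref{e2}) into pointwise bounds of the form $N(\sigma_b\circ\phi\circ\sigma_a,z)\lesssim\epsilon\,\frac{(1-|b|^2)^{1-p}}{(1-|a|^2)^{1-q}}\ln|z|^{-1}$ on $|z|>r$, which reduce everything to the tails $\int_{|z|>r}|(f_n\circ\sigma_b)'(z)|^2(1-|z|^2)\,dm(z)$; and the vanishing of these tails as $r\to1$, \emph{uniformly in $n$}, comes from H\"older's inequality plus Stegenga's Carleson-measure theorem \cite{Stegenga}, not from pointwise decay. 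Without these two ingredients---the counting-function transfer of (\ref{e2}) and the Stegenga tail estimate---your outline for this case also stops short of a proof.
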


Theorem \ref{tB} covers the corresponding $\mathcal{BMOA}$-results in \cite{Smith, WulanZZ, LaitNST}, but also it derives the following assertion extending the known one in \cite{Roan, MadiganM, Xiao}.

\begin{corollary}\label{cor2}
Let $\phi$ be an analytic self-map of $\mathbb D$ and $p\in [0,2)$. If $C_\phi: \mathcal{CA}_p\mapsto\mathcal{CA}_p$ is compact then (\ref{e1b}) holds and
\begin{equation}\label{e1aa}
\lim_{|\phi(a)|\to 1}\left(\frac{1-|a|^2}{1-|\phi(a)|^2}\right)^{\frac{3-p}{2}}|\phi'(a)|=0.
\end{equation}
Conversely, if (\ref{e1b}) holds and (\ref{e1aa}) is valid for $p\in (1,2)$ then $C_\phi: \mathcal{CA}_p\mapsto\mathcal{CA}_p$ is compact.
\end{corollary}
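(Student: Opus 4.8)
The plan is to \emph{derive} Corollary~\ref{cor2} from Theorem~\ref{tB} by matching its two conditions against (\ref{e1b}) and (\ref{e1aa}) through the normalized self-map
$$
\psi_a:=\sigma_{\phi(a)}\circ\phi\circ\sigma_a ,
$$
which fixes the origin because $\sigma_a(0)=a$ and $\sigma_{\phi(a)}(\phi(a))=0$. First I would record the differential identity: since $\sigma_a'(0)=-(1-|a|^2)$ and $\sigma_{\phi(a)}'(\phi(a))=-(1-|\phi(a)|^2)^{-1}$, the chain rule gives $|\psi_a'(0)|=(1-|a|^2)|\phi'(a)|/(1-|\phi(a)|^2)$, whence
$$
\left(\frac{1-|a|^2}{1-|\phi(a)|^2}\right)^{\frac{3-p}{2}}|\phi'(a)|
=\left(\frac{1-|a|^2}{1-|\phi(a)|^2}\right)^{\frac{1-p}{2}}|\psi_a'(0)| .
$$
Moreover, because $\psi_a(z)=\sum_{n\ge1}c_nz^n$ has no constant term, its first Taylor coefficient yields $\|\psi_a\|_2^2\gtrsim|\psi_a'(0)|^2$. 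Taking $q=p$ in (\ref{e1}) and (\ref{e2}) and combining these two facts is precisely what converts the $\|\psi_a\|_2^2$-conditions of Theorem~\ref{tB} into the $|\phi'|$-conditions of the corollary.

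For the necessity, valid for every $p\in[0,2)$: if $C_\phi$ is compact, Theorem~\ref{tB} with $q=p$ furnishes (\ref{e1}) and (\ref{e2}). Inserting $\|\psi_a\|_2^2\gtrsim|\psi_a'(0)|^2$ into the supremum and the limit and then rewriting by the displayed identity produces (\ref{e1b}) and (\ref{e1aa}) at once; this direction is routine.

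For the sufficiency, claimed only for $p\in(1,2)$, I would appeal to the converse half of Theorem~\ref{tB} for the pair $(p,p)\in(1,2)\times[0,2)$, so it suffices to deduce (\ref{e1}) and (\ref{e2}) from (\ref{e1b}) and (\ref{e1aa}). The boundedness implication (\ref{e1b})$\Rightarrow$(\ref{e1}) is exactly Corollary~\ref{cor1}(ii). For the vanishing (\ref{e1aa})$\Rightarrow$(\ref{e2}) I would use the conformal invariance of the Schwarz--Pick derivative, which for $\psi_a$ reads $(1-|w|^2)|\psi_a'(w)|/(1-|\psi_a(w)|^2)=\tau_\phi(\sigma_a(w))$ with $\tau_\phi(u):=(1-|u|^2)|\phi'(u)|/(1-|\phi(u)|^2)$, together with the Littlewood--Paley equivalence $\|\psi_a\|_2^2\asymp\int_{\mathbb D}|\psi_a'(w)|^2(1-|w|^2)\,dA(w)$. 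Substituting the invariance, changing the variable by $u=\sigma_a(w)$, and bounding $\tau_\phi(u)^2\le C\,(1-|u|^2)^{p-1}(1-|\phi(u)|^2)^{1-p}$ via (\ref{e1b}) reduces (\ref{e2}) to showing that
$$
(1-|a|^2)^{2-p}(1-|\phi(a)|^2)^{p+1}\int_{\mathbb D}\frac{(1-|u|^2)^{p-2}(1-|\phi(u)|^2)^{3-p}}{|1-\overline{\phi(a)}\phi(u)|^{4}\,|1-\bar a u|^{2}}\,dA(u)\longrightarrow 0
$$
as $|\phi(a)|\to1$. I would obtain this by splitting the integral at a level $|\phi(u)|=r<1$: on $\{|\phi(u)|\le r\}$ the kernel $|1-\overline{\phi(a)}\phi(u)|^{-4}$ is bounded and the standard estimate for $\int_{\mathbb D}(1-|u|^2)^{p-2}|1-\bar au|^{-2}\,dA$ (finite since $p>1$) leaves a prefactor $(1-|\phi(a)|^2)^{p+1}\to0$; on $\{|\phi(u)|>r\}$ the same computation applies with $C$ replaced by the quantity in (\ref{e1aa}), which is $<\varepsilon(r)$ there, while (\ref{e1})---already secured---keeps the integral uniformly bounded. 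Letting $|\phi(a)|\to1$ and then $r\to1$ gives (\ref{e2}), and the converse of Theorem~\ref{tB} finishes the proof.

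The hard part will be the last integral estimate. Unlike the lower bound, $\|\psi_a\|_2^2$ is genuinely \emph{not} controlled by $|\psi_a'(0)|$ alone (witness $\psi_a(z)=z^2$), so the argument must exploit the \emph{global} hypothesis (\ref{e1b}) on the Schwarz--Pick derivative and carefully track the concentration of the kernel $|1-\overline{\phi(a)}\phi(u)|^{-4}$ as $\phi(a)$ tends to $\mathbb T$. Balancing this concentration against the weight $(1-|\phi(u)|^2)^{3-p}$---through the classical bounds for $\int_{\mathbb D}(1-|u|^2)^{c}|1-\bar bu|^{-d}\,dA(u)$---is exactly where the restriction $p\in(1,2)$, i.e. the Lipschitz regime $0<\tfrac{p-1}{2}<\tfrac12$, is used.
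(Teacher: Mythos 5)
Your necessity argument is correct and coincides with the paper's: Theorem~\ref{tB} with $q=p$ gives (\ref{e1}) and (\ref{e2}), and the coefficient estimate $|\psi_a'(0)|^2\lesssim\|\psi_a\|_2^2$ (this is exactly (\ref{e5}) applied to $\psi_a$), together with the chain-rule identity $|\psi_a'(0)|=(1-|a|^2)|\phi'(a)|/(1-|\phi(a)|^2)$, converts them into (\ref{e1b}) and (\ref{e1aa}). Your sufficiency route, however, genuinely differs from the paper's. The paper's entire proof is a citation: for $p\in(1,2)$ one has $\mathcal{CA}_p=\mathcal{A}_{\frac{p-1}{2}}$, a Bloch-type space, and the compactness criterion of \cite[Theorem 1.4 (c)]{Xiao} is precisely ``(\ref{e1b}) and (\ref{e1aa})''. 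You instead try to verify the hypotheses of the converse half of Theorem~\ref{tB} for $(p,p)\in(1,2)\times[0,2)$. That reduction is legitimate in principle, and (\ref{e1b})$\Rightarrow$(\ref{e1}) via Corollary~\ref{cor1}(ii) plus Theorem~\ref{tA} is fine, but your sketch of (\ref{e1aa})$\Rightarrow$(\ref{e2}) contains a genuine logical error.

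The flawed step is the claim that ``(\ref{e1})---already secured---keeps the integral uniformly bounded.'' After you replace $\tau_\phi(u)^2$ by its (\ref{e1b})/(\ref{e1aa}) majorant $(1-|u|^2)^{p-1}(1-|\phi(u)|^2)^{1-p}$, the quantity you must bound,
$$
J(a)=(1-|a|^2)^{2-p}(1-|\phi(a)|^2)^{1+p}\int_{\mathbb D}\frac{(1-|u|^2)^{p-2}(1-|\phi(u)|^2)^{3-p}}{|1-\overline{\phi(a)}\phi(u)|^{4}\,|1-\bar au|^{2}}\,dm(u),
$$
dominates pointwise the quantity that (\ref{e1}) controls, namely the same expression with the \emph{true} $\tau_\phi(u)^2$ in place of its majorant; an upper bound on the smaller integral says nothing about the larger one, so the implication runs in the wrong direction (think of $\phi$ with many zeros of $\phi'$: $\tau_\phi$ vanishes there while the majorant does not). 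Fortunately $\sup_{a\in\mathbb D}J(a)<\infty$ is true and elementary, so your argument can be repaired without (\ref{e1}): since $3-p>0$ and $1+p>0$, the inequalities $1-|\phi(u)|^2\le 2|1-\overline{\phi(a)}\phi(u)|$ and $1-|\phi(a)|^2\le 2|1-\overline{\phi(a)}\phi(u)|$ yield
$$
\frac{(1-|\phi(u)|^2)^{3-p}}{|1-\overline{\phi(a)}\phi(u)|^{4}}\le\frac{2^{4}}{(1-|\phi(a)|^2)^{1+p}},
$$
whence $J(a)\lesssim(1-|a|^2)^{2-p}\int_{\mathbb D}(1-|u|^2)^{p-2}|1-\bar au|^{-2}\,dm(u)\lesssim 1$, the last step using the standard kernel estimate with $p-2>-1$ and $2-p>0$; this, not (\ref{e1}), is where the restriction $p\in(1,2)$ enters. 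With this substitute your splitting at $|\phi(u)|=r$ closes the proof: the region $\{|\phi(u)|\le r\}$ contributes $O\big((1-r)^{-4}(1-|\phi(a)|^2)^{1+p}\big)\to 0$ as $|\phi(a)|\to 1$, the region $\{|\phi(u)|>r\}$ contributes $O(\varepsilon(r)^2)$, and letting $r\to 1$ gives (\ref{e2}). The repaired argument is more self-contained than the paper's, at the cost of this extra page of estimation.
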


\begin{conjecture}\label{c} The converse part of Theorem \ref{tB} still holds for $(p,q)\in [0,2)\times[0,2)\setminus \big([0,2)\times[1,1]\cup (1,2)\times[0,2)\big)$.
\end{conjecture}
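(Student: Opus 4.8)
\section*{Proof proposal for Conjecture \ref{c}}

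The plan is to run the sequential characterization of compactness: $C_\phi:\mathcal{CA}_p\to\mathcal{CA}_q$ is compact exactly when $\|C_\phi f_n\|_{\mathcal{CA}_q,\ast}\to 0$ for every $\{f_n\}$ bounded in $\mathcal{CA}_p$ that tends to $0$ uniformly on compact subsets of $\mathbb D$ (the point-evaluation part is automatic, since $(C_\phi f_n)(0)=f_n(\phi(0))\to 0$). Writing $b=\phi(a)$ and $\psi_a=\sigma_b\circ\phi\circ\sigma_a$, so that $\psi_a(0)=0$ and $\phi\circ\sigma_a=\sigma_b\circ\psi_a$, I would first record the identity
$$
C_\phi f_n\circ\sigma_a-(C_\phi f_n)(a)=\big(f_n\circ\sigma_b-f_n(b)\big)\circ\psi_a=:h^{(n)}_b\circ\psi_a ,
$$
in which $h^{(n)}_b(0)=0$ and, by Corollary \ref{cor1}, $\|h^{(n)}_b\|_2\le (1-|b|^2)^{-\frac{1-p}{2}}\|f_n\|_{\mathcal{CA}_p,\ast}$. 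The whole matter then reduces to proving
$$
\sup_{a\in\mathbb D}(1-|a|^2)^{\frac{1-q}{2}}\,\big\|h^{(n)}_{\phi(a)}\circ\psi_a\big\|_2\longrightarrow 0 .
$$

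Next I would pass to the Littlewood--Paley form of the $H^2$-oscillation and introduce the Nevanlinna counting function $N_{\psi_a}(w)=\sum_{\psi_a(z)=w}\log\frac1{|z|}$, giving (since $h^{(n)}_b(\psi_a(0))=0$) the comparison
$$
\big\|h^{(n)}_b\circ\psi_a\big\|_2^2\approx\int_{\mathbb D}\big|(h^{(n)}_b)'(w)\big|^2\,N_{\psi_a}(w)\,dA(w).
$$
This recasts the target quantity as an integral of $|(h^{(n)}_b)'|^2$ against the pull-back measures, for which I expect (\ref{e1}) to be equivalent to uniform $(p,q)$-Carleson boundedness of the family and (\ref{e2}) to encode its boundary vanishing---exactly the mechanism already behind Theorem \ref{tA} and the solved cases of Theorem \ref{tB}. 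The argument is then closed by the usual split: for $|\phi(a)|\le r$ the locally uniform decay of $f_n$ (hence of $(h^{(n)}_b)'$ on a fixed compact set, via Cauchy estimates) drives the supremum to $0$, while for $|\phi(a)|>r$ one invokes (\ref{e2}) together with the uniform $\mathcal{CA}_p$-bound on $\{f_n\}$.

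The main obstacle lives entirely in the boundary region, and it explains why the conjecture excludes the settled strip $q=1$ and the half-range $p>1$. The data in (\ref{e1})--(\ref{e2}) are built from $\|\psi_a\|_2^2$, which is only the \emph{linear} model of $\|h^{(n)}_b\circ\psi_a\|_2^2$: it is sharp when $h^{(n)}_b$ is replaced by its first Taylor term, but in general it does not control $\int_{\mathbb D}|(h^{(n)}_b)'|^2 N_{\psi_a}\,dA$. In the two solved cases this gap is closed for free---when $q=1$ the target norm is governed by an honest Carleson-measure duality, collapsing the counting-function integral onto the $\|\psi_a\|_2^2$-quantity; when $p\in(1,2)$ the source functions are Hölder continuous of order $\tfrac{p-1}{2}$, which supplies the pointwise oscillation control needed to dominate $\|h^{(n)}_b\circ\psi_a\|_2$ by $\|\psi_a\|_2$-type data directly. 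In the conjecture range $p\in[0,1]$, $q\in[0,1)\cup(1,2)$ neither device is available, so the crux is a genuinely new subordination-type estimate bounding $\int_{\mathbb D}|(h^{(n)}_b)'(w)|^2 N_{\psi_a}(w)\,dA(w)$ by a \emph{vanishing} multiple of $\|f_n\|_{\mathcal{CA}_p,\ast}^2$ uniformly in $a$; equivalently, one must show that the vanishing of $\|\psi_a\|_2^2$ propagates to the full counting-function integral against rough Campanato data. I expect this estimate, rather than the interior part or the bookkeeping, to carry the entire difficulty.

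A tempting alternative is to interpolate compactness from the two solved cases and a Cwikel-type theorem, intermediate between a compact endpoint and the bounded operator of Theorem \ref{tA}. The catch is that (\ref{e1})--(\ref{e2}) are exponent-dependent, so assuming them at $(p,q)$ does not deliver them at any auxiliary endpoint pair, and one would first have to certify that the Campanato scale interpolates correctly; for this reason I judge the direct counting-function route above, hinging on the new boundary estimate, to be the more robust line of attack.
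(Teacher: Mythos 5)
The statement you are addressing is not proved in the paper at all: it is stated as Conjecture~\ref{c} precisely because the authors could not extend the converse direction of Theorem~\ref{tB} beyond the two settled ranges $(p,q)\in[0,2)\times[1,1]$ and $(p,q)\in(1,2)\times[0,2)$. Your text, read as a proof, has a genuine and in fact total gap, and you concede this yourself: the entire argument hinges on what you call ``a genuinely new subordination-type estimate'' bounding $\int_{\mathbb D}|(h^{(n)}_b)'(w)|^2 N_{\psi_a}(w)\,dm(w)$ by a vanishing multiple of $\|f_n\|_{\mathcal{CA}_p,\ast}^2$ uniformly in $a$, and you never prove it. Everything before that point (the sequential compactness criterion, the identity $C_\phi f_n\circ\sigma_a-(C_\phi f_n)(a)=h^{(n)}_b\circ\psi_a$, the Littlewood--Paley/Nevanlinna reformulation, the interior-versus-boundary split) is exactly the scaffolding the paper already uses in Situations~1 and~2 of the proof of Theorem~\ref{tB}; none of it is new, and none of it touches the obstruction. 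A plan whose decisive step is labelled ``I expect this estimate \ldots to carry the entire difficulty'' is a research proposal, not a proof.

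That said, your diagnosis of \emph{why} the conjecture range $p\in[0,1]$, $q\in[0,1)\cup(1,2)$ resists the paper's methods is accurate and worth keeping in mind: for $q=1$ the paper's Situation~1 leans on Lemma~\ref{l31} (the Laitila--Nieminen--Saksman--Tylli measure estimate) and Lemma~\ref{l32}, both of which exploit $q=1$ in an essential way (e.g.\ the step $\|f_{n,a}\|^2_{\mathcal{CA}_q,\ast}<\infty$ uniformly and its use in the $\mathsf{T}_2$ bound), while for $p\in(1,2)$ the Lipschitz estimate (\ref{e5a}) gives the pointwise derivative control that drives Situation~2; neither mechanism survives when $p\in[0,1]$ and $q\neq 1$. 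Your skepticism about the interpolation route is also well founded, since conditions (\ref{e1})--(\ref{e2}) at one exponent pair do not imply them at the endpoints one would need. But identifying the obstruction correctly is not the same as overcoming it: as written, your proposal leaves the conjecture exactly as open as the paper does.
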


{\it Notation}: From now on, ${\mathsf X}\lesssim{\mathsf Y}$,
${\mathsf X}\gtrsim{\mathsf Y}$, and ${\mathsf X}\approx{\mathsf Y}$
represent that there exists a constant $\kappa>0$ such that ${\mathsf
X}\le \kappa{\mathsf Y}$, ${\mathsf X}\ge \kappa{\mathsf Y}$, and
$\kappa^{-1}{\mathsf Y}\le{\mathsf X}\le \kappa{\mathsf Y}$, respectively. In addition, $dm$ stands for two dimensional Lebesgue measure.

\section{Boundedness}\label{s2}

In order to prove Theorem \ref{tA} and Corollary \ref{cor1}, we need two lemmas.

\begin{lemma}\label{l21} Let $p\in [0,2)$ and $f\in\mathcal{H}^2$. Then $f\in\mathcal{CA}_p$ if and only if $\|f\|_{\mathcal{CA}_p,\ast}<\infty$.
\end{lemma}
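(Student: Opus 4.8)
The plan is to establish the two-sided estimate $\|f\|_{\mathcal{CA}_p,\ast}\approx\|f\|_{\mathcal{CA}_p}$, from which the lemma follows at once since each side is finite exactly when the other is. The starting point is a change-of-variables identity. Since $\sigma_a$ is an involutive automorphism of $\mathbb D$ carrying $\mathbb T$ onto $\mathbb T$ with $\sigma_a(0)=a$, and since $f\circ\sigma_a\in\mathcal H^2$ has mean value $(f\circ\sigma_a)(0)=f(a)$ over $\mathbb T$, writing $\eta=\sigma_a(\xi)$ and using $|d\xi|=|\sigma_a'(\eta)|\,|d\eta|=\frac{1-|a|^2}{|1-\bar a\eta|^2}|d\eta|$ gives
$$(1-|a|^2)^{1-p}\|f\circ\sigma_a-f(a)\|_2^2=(1-|a|^2)^{2-p}\int_{\mathbb T}\frac{|f(\eta)-f(a)|^2}{|1-\bar a\eta|^2}\,|d\eta|.$$
Thus $\|f\|_{\mathcal{CA}_p,\ast}^2$ is a supremum of Poisson-type weighted oscillations of $f$, and the task reduces to comparing these with the arc oscillations defining $\|f\|_{\mathcal{CA}_p}$.

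For the inequality $\|f\|_{\mathcal{CA}_p}\lesssim\|f\|_{\mathcal{CA}_p,\ast}$ I would, given an arc $I$ with centre $\zeta_I\in\mathbb T$, test the right-hand quantity at the point $a_I$ on the radius to $\zeta_I$ with $1-|a_I|\approx|I|$, so that $|1-\bar a_I\eta|\approx|I|$ for every $\eta\in I$. On $I$ the weight $\frac{1-|a_I|^2}{|1-\bar a_I\eta|^2}$ is then $\gtrsim|I|^{-1}$, whence $|I|^{-p}\int_I|f-f(a_I)|^2|d\eta|\lesssim (1-|a_I|^2)^{1-p}\|f\circ\sigma_{a_I}-f(a_I)\|_2^2$. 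Since the mean $f_I$ minimises $\int_I|f-c|^2$ over constants $c$, the left side dominates $|I|^{-p}\int_I|f-f_I|^2$, and taking the supremum over $I$ closes this direction.

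The reverse inequality $\|f\|_{\mathcal{CA}_p,\ast}\lesssim\|f\|_{\mathcal{CA}_p}$ is the harder half and is where the restriction $p<2$ enters. Here I would fix $a$, put $\zeta_a=a/|a|$, and decompose $\mathbb T$ into concentric arcs $I_k$ about $\zeta_a$ with $|I_k|\approx 2^k(1-|a|)$, stopping when $I_k=\mathbb T$. On the annulus $I_k\setminus I_{k-1}$ one has $|1-\bar a\eta|\approx 2^k(1-|a|)$, so the weighted integral is controlled by $\sum_k (2^k(1-|a|))^{-2}\int_{I_k}|f-f(a)|^2$. Splitting $\int_{I_k}|f-f(a)|^2\lesssim\int_{I_k}|f-f_{I_k}|^2+|I_k|\,|f_{I_k}-f(a)|^2$, the first term contributes $\sum_k (2^k(1-|a|))^{p-2}\|f\|_{\mathcal{CA}_p}^2$, and after multiplication by $(1-|a|^2)^{2-p}$ this becomes the geometric series $\sum_k 2^{k(p-2)}$, which converges \emph{precisely because} $p<2$.

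The remaining work, and the main obstacle, is to control the averages $|f_{I_k}-f(a)|$. I would telescope $f_{I_k}-f(a)=(f_{I_k}-f_{I_0})+(f_{I_0}-f(a))$, bounding each consecutive difference $|f_{I_j}-f_{I_{j-1}}|\lesssim|I_j|^{(p-1)/2}\|f\|_{\mathcal{CA}_p}$ by Cauchy--Schwarz over $I_j$, and estimating $|f_{I_0}-f(a)|$ by expanding $f(a)-f_{I_0}$ against the same Poisson weight and re-using the dyadic decomposition. The resulting sums again reduce to geometric series in $2^{k(p-2)}$ (with a harmless linear factor at $p=1$), all summable for $p<2$; collecting the estimates yields $\|f\|_{\mathcal{CA}_p,\ast}\lesssim\|f\|_{\mathcal{CA}_p}$. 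Should this bookkeeping prove unwieldy, an alternative is to pass to the Littlewood--Paley forms $\|f\|_{\mathcal{CA}_p}^2\approx\sup_I|I|^{-p}\int_{S(I)}|f'|^2(1-|w|^2)\,dm$ and $\|f\|_{\mathcal{CA}_p,\ast}^2\approx\sup_a(1-|a|^2)^{1-p}\int_{\mathbb D}|f'|^2(1-|\sigma_a(w)|^2)\,dm$ and compare the two measures directly.
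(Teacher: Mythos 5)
Your proposal is correct, but it takes a genuinely different route from the paper's proof. You establish the two-sided estimate $\|f\|_{\mathcal{CA}_p}\approx\|f\|_{\mathcal{CA}_p,\ast}$ directly and uniformly in $p\in[0,2)$: rewrite the $\ast$-norm as a Poisson-weighted oscillation via $\eta=\sigma_a(\xi)$, test at $a_I$ with $1-|a_I|\approx|I|$ for one inequality, and run a dyadic decomposition with telescoping of arc means for the other --- the classical Garsia-norm argument adapted to the Campanato scale. The paper instead argues by cases on $p$: for $p=0$ it declares the claim trivial; for $p\in(0,1]$ it quotes \cite[Theorem 3.2.1]{Xiao2} together with the Hardy--Littlewood identity (\ref{e4}), i.e.\ a Carleson-measure characterization; and for $p\in(1,2)$ it passes through the identification $\mathcal{CA}_p=\mathcal{A}_{\frac{p-1}{2}}$, getting one direction from the derivative bound $(1-|a|^2)|f'(a)|\le(2\pi)^{-1/2}\|f\circ\sigma_a-f(a)\|_2$ (that is, (\ref{e5})--(\ref{e5a}) plus the Hardy--Littlewood theorem on Lipschitz classes) and the other by integrating the Lipschitz estimate against $\int_{\mathbb T}|1-\bar a\eta|^{p-3}\,|d\eta|\lesssim(1-|a|^2)^{p-2}$. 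Your approach buys self-containedness (no appeal to the table of identifications or to external structure theorems), a single argument valid for all $p\in[0,2)$, the quantitative norm equivalence rather than just the stated equivalence of finiteness, and a transparent explanation of why $p<2$ is needed (convergence of $\sum_k 2^{k(p-2)}$). The paper's route buys brevity, and its intermediate products --- notably (\ref{e5a}) and the Lipschitz identification --- are reused later in the paper (in Corollary \ref{cor1}(ii), Corollary \ref{cor2}, and the compactness arguments), so they are not wasted work there. Two points to tighten if you write yours out in full: the step controlling $|f_{I_0}-f(a)|$ should explicitly invoke the Poisson representation $f(a)=(2\pi)^{-1}\int_{\mathbb T}f(\eta)P_a(\eta)\,|d\eta|$, which is legitimate since $f\in\mathcal H^2$; and the telescoping sum behaves differently in the regimes $p<1$, $p=1$, $p>1$ (geometric series dominated by the smallest arc, a linear-in-$k$ factor, geometric series dominated by the largest arc, respectively), although in every case the final series converges for $p\in[0,2)$ exactly as you claim.
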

\begin{proof} {\it Case 1}: $p=0$. This is trivial.

{\it Case 2}: $p\in (0,1]$. This situation can be verified by \cite[Theorem 3.2.1]{Xiao2} and the well-known Hardy-Littlewood identity for $f\in \mathcal{H}^2$:
\begin{equation}\label{e4}
\pi^{-1}\int_{\mathbb D}|f'(z)|^2(-\ln|z|^2)\,dm(z)=(2\pi)^{-1}\int_{\mathbb T}|f(\xi)-f(0)|^2\,|d\xi|.
\end{equation}

{\it Case 3}: $p\in (1,2)$. Let $g=f\circ\sigma_a-f(a)$. Then
\begin{equation}\label{e5}
(1-|a|^2)|f'(a)|=|g'(0)|\le (2\pi)^{-1/2}\|g\|_2=(2\pi)^{-1/2}\|f\circ\sigma_a-f(a)\|_2.
\end{equation}
If $f\in\mathcal{CA}_p$ then an application of (\ref{e5}) yields
\begin{equation}\label{e5a}
\sup_{a\in\mathbb D}(1-|a|^2)^{\frac{3-p}{2}}|f'(a)|<\infty
\end{equation}
and consequently, $f\in\mathcal{A}_{\frac{p-1}{2}}$, as desired. Conversely, if $f\in\mathcal{A}_{\frac{p-1}{2}}$ then
$$
\mathsf{A}=\sup_{\xi_1\not=\xi_2\ in\ {\mathbb D}\cup\mathbb T}\frac{|f(\xi_1)-f(\xi_2)|}{|\xi_1-\xi_2|^{\frac{p-1}{2}}}<\infty.
$$
This, along with $p\in (1,2)$ and \cite[p. 63, Ex. 8]{Zhu}, gives
\begin{eqnarray*}
&&\|f\|^2_{\mathcal{CA}_p,\ast}\\
&&=\sup_{a\in\mathbb D}(1-|a|^2)^{1-p}\int_{\mathbb T}|f\circ\sigma_a(\xi)-f(a)|^2\,|d\xi|\\
&&\lesssim\mathsf{A}^2\sup_{a\in\mathbb D}\int_{\mathbb T}\left(\frac{1-|a|^2}{|\sigma_a(\xi)-a|}\right)^{1-p}\,|d\xi|\\
&&\approx\mathsf{A}^2\sup_{a\in\mathbb D}\int_{\mathbb T}\frac{(1-|a|^2)^{2-p}}{|1-\bar{a}\eta|^{3-p}}\,|d\eta|\\
&&\lesssim \mathsf{A}^2.
\end{eqnarray*}
\end{proof}

\begin{lemma}\label{l22} For $p\in [0,2)$ let $f_b(z)=(1-|b|^2)^{\frac{1+p}{2}}/(1-\bar{b}z)$. Then $f_b$ is uniformly bounded in $\mathcal{CA}_p$, i.e., $\sup_{b\in\mathbb D}\|f_b\|_{\mathcal{CA}_p,\ast}<\infty.$
\end{lemma}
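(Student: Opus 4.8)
The plan is to reduce the assertion to a single supremum estimate via the already-established Lemma \ref{l21}, which for $p\in[0,2)$ identifies membership in $\mathcal{CA}_p$ with finiteness of $\|\cdot\|_{\mathcal{CA}_p,\ast}$. Thus it suffices to show
\[
\sup_{a,b\in\mathbb D}(1-|a|^2)^{1-p}\|f_b\circ\sigma_a-f_b(a)\|_2^2<\infty.
\]
The starting point is an explicit computation of $f_b\circ\sigma_a$. Since $\sigma_a$ is a Möbius involution, I would simplify $1-\bar b\,\sigma_a(z)$, introduce the abbreviation $c=\sigma_a(b)$, and observe that $f_b(a)=f_b(\sigma_a(0))$ so that subtracting $f_b(a)$ removes exactly the constant term. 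After factoring out $1-\bar b a$, this should collapse to the single-term form
\[
f_b(\sigma_a(z))-f_b(a)=-\frac{(1-|b|^2)^{\frac{1+p}{2}}(\bar a-\bar c)}{1-\bar b a}\cdot\frac{z}{1-\bar c z}.
\]

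Next I would evaluate the $L^2(\mathbb T)$ norm using the standard integral $\int_{\mathbb T}|1-\bar c\,\xi|^{-2}\,|d\xi|=2\pi(1-|c|^2)^{-1}$, obtaining a closed-form expression. Into this I substitute the two Möbius identities
\[
1-|c|^2=\frac{(1-|a|^2)(1-|b|^2)}{|1-\bar a b|^2},\qquad a-c=\frac{b\,(1-|a|^2)}{1-\bar a b},
\]
together with $|1-\bar b a|=|1-\bar a b|$. These produce cancellations that, after dropping the harmless factor $|b|^2\le 1$, reduce the whole problem to showing
\[
\sup_{a,b\in\mathbb D}\frac{(1-|b|^2)^{p}(1-|a|^2)^{2-p}}{|1-\bar a b|^2}<\infty.
\]

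The final step is an elementary inequality. Since $|1-\bar a b|\ge 1-|a||b|\ge\max(1-|a|,1-|b|)\gtrsim\max(1-|a|^2,1-|b|^2)$, and since $p\in[0,2)$ forces both exponents $p$ and $2-p$ to be nonnegative, a case split according to whether $1-|a|^2$ or $1-|b|^2$ is the larger quantity bounds the ratio by an absolute constant (one factor becomes a ratio in $[0,1]$ raised to a nonnegative power). I expect the only real obstacle to be the algebraic bookkeeping in the explicit computation of $f_b\circ\sigma_a-f_b(a)$; once the clean single-term form above emerges, the norm evaluation and the concluding comparison are routine and yield a bound independent of both $a$ and $b$.
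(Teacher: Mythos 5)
Your proof is correct, but it takes a genuinely different route from the paper's. You exploit the fact that the Cauchy-kernel-type test function transforms into a function of the same shape under composition with $\sigma_a$, so that the norm can be computed \emph{exactly}: your intermediate formulas all check out, namely
\[
f_b\circ\sigma_a(z)-f_b(a)=\frac{(1-|b|^2)^{\frac{1+p}{2}}(\bar c-\bar a)}{1-\bar b a}\cdot\frac{z}{1-\bar c z},
\qquad c=\sigma_a(b),
\]
and, after inserting $1-|c|^2=\frac{(1-|a|^2)(1-|b|^2)}{|1-\bar a b|^2}$ and $|a-c|=\frac{|b|(1-|a|^2)}{|1-\bar a b|}$ into the exact evaluation $\int_{\mathbb T}|1-\bar c\xi|^{-2}|d\xi|=2\pi(1-|c|^2)^{-1}$, one gets the closed form
\[
(1-|a|^2)^{1-p}\|f_b\circ\sigma_a-f_b(a)\|_2^2=\frac{2\pi|b|^2(1-|b|^2)^{p}(1-|a|^2)^{2-p}}{|1-\bar a b|^2},
\]
after which your two-case comparison against $\max(1-|a|^2,1-|b|^2)$ finishes, since both exponents $p$ and $2-p$ are nonnegative precisely because $p\in[0,2)$. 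The paper instead bounds the area integral $\int_{\mathbb D}|f_b'(z)|^2(1-|\sigma_a(z)|^2)\,dm(z)$ by the estimate of Ortega--F\'abrega \cite[Lemma 2.5]{OrtF}, converts that into a bound on $\|f_b\|_{\mathcal{CA}_p,\ast}$ via the Hardy--Littlewood identity, and then substitutes $a=\sigma_b(c)$; interestingly, the paper's final supremum $\sup_{c}\big(\frac{1-|c|^2}{|1-\bar b c|}\big)^{2-p}|1-\bar b c|^{p}$ is exactly your final quantity rewritten in the variable $c$, so the two arguments share the same elementary endgame and differ only in the main technical step. What your computation buys is self-containedness and precision: no citation to \cite{OrtF}, no passage between boundary $L^2$ integrals and area integrals (a step where the pointwise comparison $1-|z|\lesssim -\ln|z|$ only goes one way and needs some care), and an explicit constant. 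What the paper's route buys is robustness: the area-integral method does not require the composed test function to be computable in closed form, and it reuses machinery the paper needs elsewhere anyway.
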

\begin{proof} Using \cite[Lemma 2.5]{OrtF}, we get the following estimate:
\begin{eqnarray*}
&&\mathsf{B}\\
&&=\int_{\mathbb D}|f'_b(z)|^2(1-|\sigma_a(z)|^2)\,dm(z)\\
&&=\big(|b|(1-|b|^2)^\frac{1+p}{2}\big)^2(1-|a|^2)\int_{\mathbb D}\frac{1-|z|^2}{|1-\bar{a}z|^2|1-\bar{b}z|^4}\,dm(z)\\
&&\lesssim\frac{\big(|b|(1-|b|^2)^\frac{1+p}{2}\big)^2(1-|a|^2)}{(1-|b|^2)|1-\bar{a}b|^2}.
\end{eqnarray*}
Choosing $a=\sigma_b(c)$, we utilize $1-|z|\lesssim -\ln |z|$ to obtain that if $p\in [0,2)$ then
\begin{eqnarray*}
&&\|f_b\|_{\mathcal{CA}_p,\ast}^2\\
&&\lesssim\sup_{a\in\mathbb D}(1-|a|^2)^{1-p}\mathsf{B}\\
&&\lesssim\sup_{a\in\mathbb D}\frac{(1-|a|^2)^{2-p}(1-|b|^2)^{p}}{|1-\bar{a}b|^2}\\
&&=\sup_{c\in\mathbb D}\left(\frac{1-|c|^2}{|1-\bar{b}c|}\right)^{2-p}|1-\bar{b}c|^{p}\\
&&\le 2^2,
\end{eqnarray*}
as desired.
\end{proof}

\begin{proof}[Proof of Theorem \ref{tA}] Using \cite[Proposition 2.3]{Lait1}, we have that if $g(0)=0=\psi(0)$, $g\in\mathcal{H}^2$, and $\psi$ is an analytic self-map of $\mathbb D$, then
\begin{equation}\label{e8}
\|g\circ\psi\|_2\lesssim\|g\|_2\|\psi\|_2.
\end{equation}
Setting
$$
g_a=f\circ\sigma_{\phi(a)}-f\circ\phi(a)\quad\&\quad \psi_a=\sigma_{\phi(a)}\circ\phi\circ\sigma_a,
$$
we get
$$
g_a\circ\psi_a=f\circ\phi\circ\sigma_a-f\circ\phi(a).
$$
As a consequence of Lemma \ref{l21} and (\ref{e8}), we find that if (\ref{e1}) is valid then
\begin{eqnarray*}
&&\|C_\phi f\|_{\mathcal{CA}_q,\ast}^2\\
&&=\sup_{a\in\mathbb D}(1-|a|^2)^{1-q}\|f\circ\phi\circ\sigma_a-f\circ\phi(a)\|_2^2\\
&&\lesssim\sup_{a\in\mathbb D}\frac{(1-|a|^2)^{1-q}}{(1-|\phi(a)|^2)^{1-p}}\big((1-|\phi(a)|^2)^{1-p}\|g_a\|_2^2\big)\|\psi_a\|_2^2\\
&&\lesssim\|f\|_{\mathcal{CA}_p,\ast}^2\sup_{a\in\mathbb D}\frac{(1-|a|^2)^{1-q}}{(1-|\phi(a)|^2)^{1-p}}\|\psi_a\|_2^2,
\end{eqnarray*}
and consequently, $C_\phi$ exists as a bounded operator from $\mathcal{CA}_p$ into $\mathcal{CA}_q$.

For the ``only-if" part, recall the so-called Navanlinna counting function of $\phi$:
$$
N(\phi,w)=\sum_{z:\ \phi(z)=w}\ln |z|^{-1}\quad\forall\quad w\in\mathbb D\setminus\{\phi(0)\}
$$
and the associated change of variable formula:
\begin{equation}\label{e9}
\int_{\mathbb D}|(C_\phi f)'(z)|^2\ln |z|^{-1}\,dm(z)=\int_{\mathbb D}|f'(w)|^2 N(\phi,w)\,dm(w)\ \ \forall\ f\in\mathcal{H}^2.
\end{equation}
A combination of (\ref{e9}) and (\ref{e4}) gives that if $b=\phi(a)$ then
\begin{equation}\label{e10}
\|\sigma_b\circ\phi\circ\sigma_a\|_2^2=4\int_{\mathbb D}N(\sigma_b\circ\phi\circ\sigma_a,z)\,dm(z).
\end{equation}
Now, if $C_\phi: \mathcal{CA}_p\mapsto\mathcal{CA}_q$ is bounded, then the test function $f_b$ in Lemma \ref{l22} is used to imply
$$
\mathsf{C}=\sup_{a,b\in\mathbb D}(1-|a|^2)^{1-q}\int_{\mathbb D}|(f_b\circ\phi\circ\sigma_a)'(z)|^2\ln|z|^{-1}\,dm(z)<\infty.
$$
and consequently,
\begin{eqnarray*}
&&\mathsf{C}\\
&&\gtrsim\sup_{a,b\in\mathbb D}(1-|a|^2)^{1-q}|b|^2(1-|b|^2)^{p-1}\int_{\mathbb D}|\sigma_b'(z)|^2N(\phi\circ\sigma_a,z)\,dm(z)\\
&&\gtrsim\sup_{a\in\mathbb D}\frac{(1-|a|^2)^{1-q}|\phi(a)|^2}{(1-|\phi(a)|^2)^{1-p}}\int_{\mathbb D}N(\sigma_{\phi(a)}\circ\phi\circ\sigma_a,z)\,dm(z)\\
&&\gtrsim\sup_{a\in\mathbb D}\frac{(1-|a|^2)^{1-q}|\phi(a)|^2}{(1-|\phi(a)|^2)^{1-p}}\|\sigma_{\phi(a)}\circ\phi\circ\sigma_a\|_2^2\\
&&\gtrsim s^2\sup_{a\in\mathbb D}\frac{(1-|a|^2)^{1-q}}{(1-|\phi(a)|^2)^{1-p}}\|\sigma_{\phi(a)}\circ\phi\circ\sigma_a\|_2^2
\end{eqnarray*}
as $|\phi(a)|> s\in (0,1)$. Note also that the identity map $f(z)=z$ is an element of $\mathcal{CA}_p$. Thus, boundedness of $C_\phi: \mathcal{CA}_p\mapsto\mathcal{CA}_q$ ensures $\|\phi\|_{\mathcal{CA}_q,\ast}<\infty$, and consequently, if $|\phi(a)|\le s<1$ then
\begin{eqnarray*}
&&\sup_{a\in\mathbb D}\frac{(1-|a|^2)^{1-q}}{(1-|\phi(a)|^2)^{1-p}}\|\sigma_{\phi(a)}\circ\phi\circ\sigma_a\|_2^2\\
&&\lesssim\big(1+(1-s)^{p-1}\big)\sup_{a\in\mathbb D}{(1-|a|^2)^{1-q}}\int_{\mathbb T}\left|\frac{{\phi(a)}-\phi\circ\sigma_a(\xi)}{1-\overline{\phi(a)}\phi\circ\sigma_a(\xi)}\right|^2\,|d\xi|\\
&&\lesssim\Big(\frac{1+(1-s)^{p-1}}{(1-s)^2}\Big)\sup_{a\in\mathbb D}{(1-|a|^2)^{1-q}}\|{\phi(a)}-\phi\circ\sigma_a\|_2^2\\
&&\approx\Big(\frac{1+(1-s)^{p-1}}{(1-s)^2}\Big)\|\phi\|_{\mathcal{CA}_q,\ast}^2.
\end{eqnarray*}
The above estimates imply (\ref{e1}).

\end{proof}

\begin{proof}[Proof of Corollary \ref{cor1}] (i) Under $p\in [0,1]$, we use the Schwarz lemma for $\sigma_{\phi(0)}\circ\phi$ to deduce that (1) holds for $p=q\in [0,1]$, and so that $C_\phi$ is bounded on $\mathcal{CA}_p$ due to Theorem \ref{tA}. To reach (\ref{e1a}), let us begin with the case $\phi(0)=0$. According to the setting in the argument for Theorem \ref{tA}, the well-known Littlewood subordination principle and Schwarz's lemma for $\phi$, we have
\begin{eqnarray*}
&&\|C_\phi f\|^2_{\mathcal{CA}_p,\ast}\\
&&=\sup_{a\in\mathbb D}(1-|a|^2)^{1-p}\|g_a\circ\psi_a\|_2^2\\
&&\le\sup_{a\in\mathbb D}(1-|a|^2)^{1-p}\|g_a\|_2^2\\
&&\le\|f\|^2_{\mathcal{CA}_p,\ast}\sup_{a\in\mathbb D}\left(\frac{1-|a|^2}{1-|\phi(a)|^2}\right)^{1-p}\\
&&\le\|f\|^2_{\mathcal{CA}_p,\ast}.
\end{eqnarray*}

Next, for the general case let
$$
\begin{cases}
\psi=\sigma_{\phi(0)}\circ\phi;\\
\lambda=\frac{\bar{a}b-1}{1-a\bar{b}};\\
b=\phi(0);\\
c=\sigma_a(b).
\end{cases}
$$
Then $\psi(0)=0$ and thus
\begin{eqnarray*}
&&\|C_{\sigma_b}f\|^2_{\mathcal{CA}_p,\ast}\\
&&=\sup_{a\in\mathbb D}(1-|a|^2)^{1-p}\|f(\lambda\sigma_c)-f(\lambda c)\|_2^2\\
&&\le\|f\|_{\mathcal{CA}_p,\ast}^2\sup_{a\in\mathbb D}\left(\frac{1-|a|^2}{1-|c|^2}\right)^{1-p}\\
&&\le\|f\|_{\mathcal{CA}_p,\ast}^2\left(\frac{1+|b|}{1-|b|}\right)^{1-p}.
\end{eqnarray*}
Using the previous estimates, we get
$$
\|C_\phi f\|^2_{\mathcal{CA}_p,\ast}=\|f\circ\sigma_b\circ \psi\|^2_{\mathcal{CA}_p,\ast}\le\|f\circ\sigma_b\|_{\mathcal{CA}_p,\ast}^2
\le\|f\|_{\mathcal{CA}_p,\ast}^2\left(\frac{1+|b|}{1-|b|}\right)^{1-p},
$$
whence reaching (\ref{e1a}).

(ii) Suppose $p\in (1,2)$. Then (\ref{e5}) yields
$$
\sup_{a\in\mathbb D}\left(\frac{1-|a|^2}{1-|\phi(a)|^2}\right)^{3-p}|\phi'(a)|^2\le\sup_{a\in\mathbb D}\left(\frac{1-|a|^2}{1-|\phi(a)|^2}\right)^{1-p}\|\sigma_{\phi(a)}\circ\phi\circ\sigma_a\|_2^2,
$$
so, if $C_\phi$ is bounded on $\mathcal{CA}_p$ then (\ref{e1}) holds with $p=q$ due to Theorem \ref{tA}, and hence (\ref{e1b}) holds. Conversely, if (\ref{e1b}) is true, then $C_\phi$ is bounded on $\mathcal{A}_{\frac{p-1}{2}}$ (cf. \cite[Theorem A]{Madigan}) and hence bounded on $\mathcal{CA}_p$.
\end{proof}

\section{Compactness}\label{s3}

The arguments for Theorem \ref{tB} and Corollary \ref{cor2} depend on the two basic facts below.

\begin{lemma}\label{l32} Let $p\in (0,2)$ and $f\in\mathcal{CA}_p$ with $f(0)=0$. Then
$$
\int_{\mathbb T}|f(\xi)|^4\,|d\xi|\lesssim \begin{cases}\|f\|_{\mathcal{CA}_p,\ast}^2\|f\|_2^2\ \ \hbox{for}\ \ p\in [1,2);\\
\|f\|_{\mathcal{CA}_p,\ast}^2\int_0^\infty tH^p_\infty\big(\{\xi\in\mathbb T:\ |f(\xi)|>t\}\big)\,dt\ \ \hbox{for}\ \ p\in (0,1],
\end{cases}
$$
where $H^p_\infty(E)=\inf_{E\subseteq\cup_{I_j}}\sum_j|I_j|^p$ is $p$-dimensional Hausdorff capacity of $E\subseteq\mathbb T$ - the infimum is taken over all arc coverings $\cup_j I_j\supseteq E$.
\end{lemma}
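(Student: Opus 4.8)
The plan is to reduce the fourth-power integral to a Carleson-type integral against the measure $d\mu_f(z)=|f'(z)|^2(1-|z|^2)\,dm(z)$, and then to invoke two genuinely different embedding theorems according to the range of $p$. First I would observe that $f(0)=0$ forces $h(0)=0$ for $h=f^2$, so the Hardy--Littlewood identity (\ref{e4}) (in which $-\ln|z|^2\approx 1-|z|^2$) yields
\begin{equation*}
\int_{\mathbb T}|f(\xi)|^4\,|d\xi|=\int_{\mathbb T}|h(\xi)|^2\,|d\xi|\approx\int_{\mathbb D}|h'(z)|^2(1-|z|^2)\,dm(z)=4\int_{\mathbb D}|f(z)|^2\,d\mu_f(z),
\end{equation*}
a routine truncation or polynomial-approximation argument justifying this before finiteness of the left-hand side is known. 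Next, changing variables by the involution $\sigma_a$ inside $\|f\circ\sigma_a-f(a)\|_2$ and using (\ref{e4}) once more (exactly as in the proof of Lemma \ref{l21}) identifies
\begin{equation*}
\|f\|_{\mathcal{CA}_p,\ast}^2\approx\sup_{a\in\mathbb D}(1-|a|^2)^{1-p}\int_{\mathbb D}|f'(z)|^2\big(1-|\sigma_a(z)|^2\big)\,dm(z)\approx\sup_{I\subseteq\mathbb T}\frac{\mu_f\big(S(I)\big)}{|I|^p},
\end{equation*}
so that $f\in\mathcal{CA}_p$ says precisely that $\mu_f$ is a $p$-Carleson measure with constant comparable to $\|f\|_{\mathcal{CA}_p,\ast}^2$; here $S(I)$ denotes the Carleson box over the arc $I$.

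For $p\in[1,2)$ the key point is that, since arcs on $\mathbb T$ have bounded length, a $p$-Carleson measure with $p\ge 1$ is a fortiori a classical ($1$-)Carleson measure, because $\mu_f(S(I))/|I|=\big(\mu_f(S(I))/|I|^p\big)|I|^{p-1}\lesssim\|f\|_{\mathcal{CA}_p,\ast}^2$. Applying the Carleson embedding theorem to $f\in\mathcal{H}^2$ against $\mu_f$ then gives
\begin{equation*}
\int_{\mathbb D}|f(z)|^2\,d\mu_f(z)\lesssim\Big(\sup_{I\subseteq\mathbb T}\frac{\mu_f(S(I))}{|I|}\Big)\|f\|_2^2\lesssim\|f\|_{\mathcal{CA}_p,\ast}^2\|f\|_2^2,
\end{equation*}
which together with the first reduction settles this range.

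For $p\in(0,1)$ the classical embedding fails and $L^2$-control must be replaced by Hausdorff-capacity control, which is what the Choquet integral on the right-hand side encodes. Here I would use the layer-cake formula
\begin{equation*}
\int_{\mathbb D}|f(z)|^2\,d\mu_f(z)=\int_0^\infty 2t\,\mu_f\big(\{z\in\mathbb D:\ |f(z)|>t\}\big)\,dt,
\end{equation*}
and estimate the super-level sets geometrically: if $|f(z)|>t$ then $z$ lies in the Stolz cone of every boundary point in its shadow arc $I_z$, so the nontangential maximal function exceeds $t$ throughout $I_z$ and $\{|f|>t\}$ sits inside the tent over the open set $O_t=\{f^\ast>t\}$. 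Combining this with the standard fact that a $p$-Carleson measure controls tents by content, $\mu_f(\widehat O)\lesssim\|f\|_{\mathcal{CA}_p,\ast}^2\,H^p_\infty(O)$ (whose proof needs a covering argument with mild arc-dilation and the subadditivity of $H^p_\infty$), yields $\mu_f(\{|f|>t\})\lesssim\|f\|_{\mathcal{CA}_p,\ast}^2\,H^p_\infty(\{f^\ast>t\})$ and hence the desired bound but with $f^\ast$ in place of $|f|$. The main obstacle, which I would isolate as a separate capacitary inequality, is the passage from the nontangential maximal function back to the boundary values at the level of content, namely $H^p_\infty(\{f^\ast>t\})\lesssim H^p_\infty(\{|f|>\kappa t\})$, a Hausdorff-capacity analogue of the Hardy--Littlewood maximal theorem; once this is available the $t$-integral is unchanged up to the harmless dilation $t\mapsto\kappa t$, completing the case.
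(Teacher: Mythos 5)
Your reduction of $\int_{\mathbb T}|f|^4\,|d\xi|$ to $\int_{\mathbb D}|f|^2\,d\mu_f$, the identification of $\|f\|_{\mathcal{CA}_p,\ast}^2$ with the $p$-Carleson norm of $\mu_f$, and the whole case $p\in[1,2)$ (a $p$-Carleson measure with $p\ge1$ is $1$-Carleson, then the classical Carleson embedding applies) are correct and essentially identical to the paper's argument; the paper simply quotes both embeddings from \cite[p.~79, Theorem 4.1.4]{Xiao2} instead of deriving them. Your tent-content bound $\mu_f\big(\widehat O\big)\lesssim\|f\|_{\mathcal{CA}_p,\ast}^2H^p_\infty(O)$, which you correctly flag as needing proof, is also fine: pass from a covering $\{I_j\}$ of $O$ to the connected components $\{J_k\}$ of $\bigcup_jI_j$, note $\widehat O\subseteq\bigcup_k\widehat{J_k}$ and $|J_k|^p\le\big(\sum_{I_j\subseteq J_k}|I_j|\big)^p\le\sum_{I_j\subseteq J_k}|I_j|^p$, which is exactly where $p\le1$ enters.

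The genuine gap is the ``capacitary inequality'' you isolate to finish the case $p\in(0,1)$: the level-set comparison $H^p_\infty(\{f^\ast>t\})\lesssim H^p_\infty(\{|f|>\kappa t\})$ is false, and it is not an analogue of the Hardy--Littlewood maximal theorem --- that theorem is a weak-type \emph{norm} inequality, not a comparison of level sets, and the level-set comparison fails already for Lebesgue measure ($p=1$). Concretely, given candidate constants $C,\kappa$, take $t=2/\kappa$ and let $f_N$ be the outer function whose boundary modulus is $N$ on an arc $I$ of length $\epsilon$ and $1$ off $I$. Then $\{|f_N|>\kappa t\}=I$ for $N>2$, so the right-hand side is $\approx\epsilon^p$; but $\log|f_N(z)|=(\log N)\,\omega(z,I)$ (harmonic measure), and evaluating at radial points of height $d$ gives $|f_N|\gtrsim N^{c\epsilon/d}$ there, so the nontangential maximal function satisfies $f_N^\ast>t$ on an arc of length $\approx\epsilon\log N/\log t$; hence the left-hand side is $\gtrsim(\epsilon\log N/\log t)^p$, and the ratio blows up as $N\to\infty$. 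What is true --- and all your argument actually needs, since the estimate is integrated against $t\,dt$ anyway --- is the Choquet-integral (strong-type) inequality $\int_0^\infty tH^p_\infty(\{f^\ast>t\})\,dt\lesssim\int_0^\infty tH^p_\infty(\{|f|>t\})\,dt$. This follows from the pointwise bound $f^\ast\lesssim M(f|_{\mathbb T})$, valid because $f\in\mathcal H^2$ is the Poisson integral of its boundary values, combined with the theorem of Orobitg and Verdera (Bull. London Math. Soc. 30 (1998)) that the Hardy--Littlewood maximal operator $M$ is bounded on the Choquet space $L^q(H^p_\infty)$ whenever $q>p$ (here $q=2>1\ge p$). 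With that replacement --- or by quoting \cite[p.~79, Theorem 4.1.4]{Xiao2} outright, as the paper does --- your proof closes; note also that your Choquet branch should cover $p=1$ as well, since the lemma asserts both bounds there, though your argument extends verbatim to that endpoint.
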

\begin{proof} Let $d\mu=|f'(z)|^2(1-|z|^2)dm(z)$. From $f\in\mathcal{CA}_p$ it follows that $\mu$ is a $p$-Carleson measure on $\mathbb D$ - in other words -
$$
\|\mu\|_{\mathcal{CM}_p}=\sup_{I\subseteq\mathbb T}|I|^{-p}\mu\big(S(I)\big)\lesssim\|f\|_{\mathcal{CA}_p,\ast}^2,
$$
where $
S(I)=\{z=re^{i\theta}\in\mathbb D:\ 1-|I|/(2\pi)\le r<1\ \&\ |\theta-\theta_I|\le |I|/2\}$
is the Carleson box based on the arc $I\subseteq\mathbb T$ taking $\theta_I$ as its center. In fact, if $a=(1-|I|/(2\pi))e^{i(\theta_I+|I|/4)}$ then a simple computation, along with (\ref{e4}) and $-\ln |z|\approx 1-|z|^2$ as $|z|\ge 2^{-1}$ as well as Lemma \ref{l21}, gives
\begin{eqnarray*}
&&|I|^{-p}\mu\big(S(I)\big)\\
&&\lesssim (1-|a|^2)^{1-p}\int_{S(I)}|f'(z)|^2\big(1-|\sigma_a(z)|^2\big)\,dm(z)\\
&&\lesssim(1-|a|^2)^{1-p}\|f\circ\sigma_a-f(a)\|_2^2\\
&&\lesssim\|f\|^2_{\mathcal{CA}_p,\ast}.
\end{eqnarray*}
In particular, when $p\in (1,2)$, $\mu$ is also $1$-Carleson measure with
$\|\mu\|_{\mathcal{CM}_1}\lesssim\|\mu\|_{\mathcal{CM}_p}$. According to \cite[p. 79, Theorem 4.1.4]{Xiao2}, we have
$$
\int_{\mathbb D}|f|^2\,d\mu\lesssim \|\mu\|_{\mathcal{CM}_p}\begin{cases}
& \|f\|_2^2\ \ \hbox{for}\ p\in [1,2);\\
& \int_0^\infty tH^p_\infty\big(\{\xi\in\mathbb T:\ |f(\xi)|>t\}\big)\,dt\ \ \hbox{for}\ p\in (0,1].
\end{cases}
$$
This last estimate, along with the following Hardy-Stein identity based estimate  (cf. \cite[p. 36]{Xiao2})
\begin{eqnarray*}
&&\int_{\mathbb T}|f(\xi)|^4\,|d\xi|\\
&&\approx\int_{\mathbb D}|f(z)|^2|f'(z)|^2(\ln|z|^{-1})\,dm(z)\\
&&\lesssim\int_{\mathbb D}|f(z)|^2|f'(z)|^2(1-|z|^2)\,dm(z)\\
&&\approx\int_{\mathbb D}|f|^2\,d\mu,
\end{eqnarray*}
implies the desired estimate.
\end{proof}

\begin{lemma}\label{l31} Let $(p,q)\in [0,2)\times[1,1]$. If an analytic self-map $\phi$ of $\mathbb D$ satisfies (\ref{e2}), then one has
$$
\lim_{t\to 1}\sup_{|\phi(a)|\le s}\frac{(1-|a|^2)^{1-q}}{(1-|\phi(a)|^2)^{1-p}}|\{\xi\in\mathbb T:\ |\sigma_{\phi(a)}\circ\phi\circ\sigma_a(\xi)|>t\}|=0\ \forall\ s\in (0,1).
$$
\end{lemma}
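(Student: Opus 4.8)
The plan is to reduce the level-set measure to a harmonic measure of a super-level set of $\phi$, and then to use (\ref{e2}) to make that harmonic measure uniformly small. Write $\psi_a=\sigma_{\phi(a)}\circ\phi\circ\sigma_a$ and note $\psi_a(0)=\sigma_{\phi(a)}(\phi(a))=0$, so $\psi_a$ is a self-map of $\mathbb D$. Since $q=1$ the prefactor $(1-|a|^2)^{1-q}=1$, and on $|\phi(a)|\le s$ we have $1-|\phi(a)|^2\in[1-s^2,1]$, whence $\frac{(1-|a|^2)^{1-q}}{(1-|\phi(a)|^2)^{1-p}}\lesssim_{s,p}1$. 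Thus it suffices to prove $\lim_{t\to1}\sup_{|\phi(a)|\le s}|\{\xi:|\psi_a(\xi)|>t\}|=0$.

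First I would change variables on $\mathbb T$. As $\sigma_a$ is a measure-class preserving involution and $\sigma_{\phi(a)}$ maps $\mathbb T$ onto $\mathbb T$, the elementary bound ``$|\sigma_{\phi(a)}(w)|>t\Rightarrow|w|>\tau(t):=\frac{t-s}{1-st}$'' (valid for $|\phi(a)|\le s$) gives $\{\xi:|\psi_a(\xi)|>t\}\subseteq\{\xi:|\phi(\sigma_a(\xi))|>\tau(t)\}$, so that with $\eta=\sigma_a(\xi)$,
\begin{equation*}
|\{\xi:|\psi_a(\xi)|>t\}|\le\int_{G_{\tau(t)}}P(a,\eta)\,|d\eta|=2\pi\,\omega(a,G_{\tau(t)}),\qquad P(a,\eta)=\tfrac{1-|a|^2}{|1-\bar a\eta|^2},
\end{equation*}
where $G_\tau:=\{\eta\in\mathbb T:|\phi(\eta)|>\tau\}$ and $\omega(a,\cdot)=\frac1{2\pi}\int_\cdot P(a,\eta)\,|d\eta|$ is harmonic measure at $a$. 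Since $\tau(t)\to1$ as $t\to1$, uniformly for $|\phi(a)|\le s$, the claim reduces to $\lim_{\tau\to1}\sup_{|\phi(a)|\le s}\omega(a,G_\tau)=0$.

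Next I would extract from (\ref{e2}) the qualitative fact $|\{\eta\in\mathbb T:|\phi(\eta)|=1\}|=0$: at a Lebesgue density point $\eta_0$ of this set one has $\|\psi_{r\eta_0}\|_2^2\ge\int_{\{|\phi|=1\}}P(r\eta_0,\eta)\,|d\eta|=2\pi\,\omega(r\eta_0,\{|\phi|=1\})\to2\pi$ while $|\phi(r\eta_0)|\to1$, so for $p\in[0,1]$ the quotient in (\ref{e2}) would blow up, a contradiction. Hence $|G_\tau|\to0$ as $\tau\to1$. I would then split $\{a:|\phi(a)|\le s\}$ into $|a|\le\rho$ and $|a|>\rho$. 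On the compact part the Poisson kernel is bounded, so $\omega(a,G_\tau)\le\frac{\sup_{|a|\le\rho,\,\eta}P(a,\eta)}{2\pi}\,|G_\tau|\to0$ uniformly.

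The hard part is the regime $|a|\to1$ with $|\phi(a)|\le s$, where (\ref{e2}) is not directly available because it speaks only of $|\phi(a)|\to1$. Here I would argue by contradiction: if $\omega(a_n,G_{\tau_n})\ge\epsilon$ with $a_n\to\mathbb T$, $|\phi(a_n)|\le s$, $\tau_n\to1$, then concentration of the Poisson kernel forces $|G_{\tau_n}\cap I_n|\gtrsim\epsilon|I_n|$ on the boundary arc $I_n$ of length $\asymp1-|a_n|$ centred at $a_n/|a_n|$. Choosing an auxiliary point $a_n'=r_n\eta_n$ aimed at some $\eta_n\in G_{\tau_n}\cap I_n$ at the matching scale $1-|a_n'|\asymp1-|a_n|$ makes $|\phi(a_n')|\to1$, so (\ref{e2}) with $p\le1$, where the denominator stays $\le1$, yields $\|\psi_{a_n'}\|_2\to0$; thus $\phi$ has vanishing oscillation at $a_n'$ and lies near a unimodular constant on most of a comparable arc $J_n\approx I_n$. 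Together with $|G_{\tau_n}\cap I_n|\gtrsim\epsilon|I_n|$, this forces $|\phi|$ to be near $1$ on a definite proportion of $I_n$, so the Poisson average $|\phi(a_n)|$ cannot remain $\le s$ — the desired contradiction. Making this scale-matching and the passage through nontangential boundary values rigorous, so that (\ref{e2}) at the points $a_n'$ really governs the region $|\phi(a)|\le s$, is the main obstacle; it is also the step that uses $p\le1$ essentially (for $p>1$ the denominator in (\ref{e2}) tends to $0$ and no oscillation control survives).
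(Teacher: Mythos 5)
Your reductions are fine as far as they go: with $q=1$ and $|\phi(a)|\le s$ the weight is harmless, the pseudo-hyperbolic triangle inequality and the change of variable $\eta=\sigma_a(\xi)$ correctly convert the level-set measure into the harmonic measure $\omega\big(a,G_{\tau(t)}\big)$, your density-point argument does show $|\{|\phi^*|=1\}|=0$ when $p\le 1$, and the compact region $|a|\le\rho$ is trivial. But all of that is the routine part; the entire content of the lemma sits in the regime $|a|\to 1$, $|\phi(a)|\le s$, and there your argument has a genuine gap, which you yourself flag. The failing step is ``choosing $a_n'=r_n\eta_n$ aimed at $\eta_n\in G_{\tau_n}\cap I_n$ at the matching scale $1-|a_n'|\asymp 1-|a_n|$ makes $|\phi(a_n')|\to 1$.'' Nothing forces this: $\eta_n\in G_{\tau_n}$ only says that the radial limit of $\phi$ at $\eta_n$ has modulus $>\tau_n$; it gives no control whatsoever on $|\phi(r_n\eta_n)|$ at a radius prescribed in advance, since $\phi$ may remain small along that radius until far beyond the scale $1-|a_n|$, and uniformity in $n$ is exactly what is at stake. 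So (\ref{e2}) never becomes applicable at your points $a_n'$, and the contradiction does not close.

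The paper's proof resolves precisely this point with two ingredients your sketch is missing. First, it does not prescribe the scale: it travels along the radius toward a boundary point $\xi_j$ of the bad set $E_j$ and stops at a radius $r_j$ where $|\phi\circ\sigma_{b_j}(r_j\xi_j)|$ equals the level $s$; such a crossing exists by continuity, because the value at the centre is $|\phi(b_j)|\le s_0<s$ while the radial limit at $\xi_j$ exceeds $t_j\ge s$. At the crossing point, (\ref{e2}) in its localized form (\ref{eYe}) genuinely applies. The price is that $r_j$ is completely uncontrolled, and this is where the second ingredient enters: the covering lemma \cite[Lemma 3]{LaitNST}, quoted as (\ref{e4e}), which supplies a positive-measure subset $F_j\subseteq E_j$ whose points see a fixed proportion $\gtrsim |E_j|$ of $E_j$ in the arc $J(r\xi)$ at \emph{every} scale $r\in[0,1)$ simultaneously. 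Choosing $\xi_j\in F_j$ guarantees that, whatever the crossing radius turns out to be, the arc $J(a_j)$ still carries a definite chunk of $E_j$, on which $\phi\circ\sigma_{b_j}$ stays pseudo-hyperbolically far (at distance $\ge\rho(t_j,s)\to 1$) from the value at $a_j$; this keeps the Garsia-type quantity in (\ref{eYe}) bounded below and yields the contradiction. Without this uniform-over-all-scales density lemma, or a substitute for it, the scheme you outline cannot be completed; that, and not the bookkeeping with $p\le 1$, is the missing idea.
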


\begin{proof} Note that
$$
|\phi\circ\sigma_a|\to 1\Longleftrightarrow|\sigma_{\phi(a)}\circ\phi\circ\sigma_a|\to 1\quad\hbox{under}\quad|\phi(a)|\le s.
$$
So, it suffices to show that (\ref{e2}) implies
\begin{equation}\label{eee}
\lim_{t\to 1}\sup_{|\phi(a)|\le s}\frac{(1-|a|^2)^{1-q}}{(1-|\phi(a)|^2)^{1-p}}|\{\xi\in\mathbb T:\ |\phi\circ\sigma_a(\xi)|>t\}|=0\ \forall\ s\in (0,1).
\end{equation}

Following \cite{LaitNST}, for $re^{i\theta}\in\mathbb D$ let
$$
J(re^{i\theta})=\{e^{it}:\ |t-\theta|\le\pi(1-r)\}.
$$
Clearly, $J(re^{i\theta})$ is the sub-arc of $\mathbb T$ centered at $e^{i\theta}$. Importantly, \cite[Lemma 3]{LaitNST} tells us that for any measurable set $E\subseteq\mathbb T$ with $1$-dimensional Lebesgue measure $|E|>0$ there exists a measurable set $F\subseteq E$ such that $|F|>0$ and
\begin{equation}\label{e4e}
\frac{|J(r\xi)\cap E|}{|J(r\xi)|}\ge{(2^{4}\pi)^{-1}|E|}\quad\forall\quad r\in [0,1)\ \&\ \xi\in F.
\end{equation}

Suppose now (\ref{e2}) is valid but (\ref{eee}) is not true. On the one hand, we have that for any $\epsilon>0$ there is an $s\in (0,1)$ such that
\begin{equation}\label{eYe}
\frac{\big(\frac{2\pi}{|J(a)|}\big)\int_{J(a)}\rho\big(\phi\circ\sigma_b(\xi),\phi\circ\sigma_b(a)\big)^2\,{|d\xi|}}{
{(1-|\phi\circ\sigma_b(a)|^2)^{1-p}}{(1-|\sigma_b(a)|^2)^{q-1}}}<\epsilon\ \ \forall\ \ |\phi\circ\sigma_b(a)|> s.
\end{equation}
Here we have used the pseudo-hyperbolic distance $\rho(z,w)=|\sigma_w(z)|$ between $z,w\in\mathbb D$ and the following basic estimate
$$
\begin{cases}
\|\sigma_{\phi(a)}\circ\phi\circ\sigma_a\|_2^2=\int_{\mathbb T}\rho\big(\phi(\xi),\phi(a)\big)^2\,P_a(\xi)|d\xi|;\\
P_a(\xi)=|\sigma_a'(\xi)|\ge 2^{-1}\pi |J(a)|^{-1}\ \forall\ \xi\in J(a).
\end{cases}
$$
On the other hand, we can select two constants $s_0\in (0,1)$ and $\epsilon_0>0$, points $b_j\in\mathbb D$, and numbers $t_j\in (0,1)$ with $\lim_{j\to\infty}t_j=1$ such that for any $j=1,2,...$ one has $|\phi(b_j)|\le s_0$ and
$$
E_j=\{\xi\in\mathbb T:\ \phi_j(\xi)=\phi\circ\sigma_{b_j}(\xi)\ \hbox{exists\ as\ radial\ limit\ and}\ |\phi_j(\xi)|>t_j\}
$$
obeys
\begin{equation}\label{e5e}
\left(\frac{(1-|b_j|^2)^{1-q}}{(1-|\phi(b_j)|^2)^{1-p}}\right)(2\pi)^{-1}|E_j|\ge\epsilon_0.
\end{equation}
This (\ref{e5e}), plus the above-stated lemma on (\ref{e4e}), ensures that one can choose sets $F_j\subseteq E_j$ such that $|F_j|>0$ and
\begin{eqnarray}\label{e6e}
&&\left(\frac{(1-|b_j|^2)^{1-q}}{(1-|\phi(b_j)|^2)^{1-p}}\right)\frac{|J(r\xi)\cap E_j|}{|J(r\xi)|}\nonumber\\
&&\ge\left(\frac{(1-|b_j|^2)^{1-q}}{(1-|\phi(b_j)|^2)^{1-p}}\right)\Big(\frac{|E_j|}{2^4\pi}\Big)\\
&&\ge{2^{-3}\epsilon_0}\quad \forall\ r\in [0,1)\ \ \&\ \ \xi\in F_j.\nonumber
\end{eqnarray}

If $\epsilon=2^{-4}\epsilon_0$ in (\ref{eYe}), then one can take such $s$ that $s_0<s<1$ and (\ref{eYe}) is true for $|\phi\circ\sigma_b(a)|>s$. Assuming $t_j\ge s$ and recalling that the definition of $E_j$ ensures
$$
|\phi\circ\sigma_{b_j}(r\xi)|\to|\phi\circ\sigma_{b_j}(\xi)|>t_j\quad\hbox{as}\ \ r\to 1\ \ \hbox{for\ each}\ \ \xi\in E_j.
$$
Of course, this last property is valid for arbitrarily chosen point $\xi_j\in F_j$. Note that
$|\phi\circ\sigma_{b_j}(0)|=|\phi(b_j)|\le s_0.$
Thus, by continuity of $|\phi\circ\sigma_{b_j}|$ there exists an $r_j\in (0,1)$ such that $|\phi\circ\sigma_{b_j}(r_j\xi_j)|=s$. If $a_j=r_j\xi_j$
then
$$
\rho\big(\phi\circ\sigma_{b_j}(\xi),\phi\circ\sigma_{b_j}(a_j)\big)\ge\rho(t_j,s)\quad\forall\quad \xi\in E_j,
$$
and hence (\ref{e6e}) and $q=1$ are applied to deduce
\begin{eqnarray*}
&&\frac{(1-|\sigma_{b_j}(a_j)|^2)^{1-q}}{(1-|\phi\circ\sigma_{b_j}(a_j)|^2)^{1-p}}\int_{J(a_j)}\rho\big(\phi\circ\sigma_{b_j}
(\xi),\phi\circ\sigma_{b_j}(a_j)\big)^2\,\frac{|d\xi|}{(2\pi)^{-1}|J(a_j)|}\\
&&\ge\left(\frac{(1-|b_j|^2)^{1-q}}{(1-|\phi(b_j)|^2)^{1-p}}\right)\left(\frac{|J(a_j)\cap E_j|}{|J(a_j)|}\right)\left(\frac{1-|\phi(b_j)|^2}{1-|\phi\circ\sigma_{b_j}(a_j)|^2}\right)^{1-p}\rho(t_j,s)^2\\
&&\ge {2^{-3}\epsilon_0}\left(\frac{\min\{1,(1-s_0^2)^{1-p}\}}{(1-s^2)^{1-p}}\right)\rho(t_j,s)^2.
\end{eqnarray*}
Since $\lim_{j\to\infty}\rho(t_j,s)=1$, it follows from (\ref{eYe}) that
\begin{eqnarray*}
&&0\\
&&=\lim_{j\to\infty}\frac{(1-|\sigma_{b_j}(a_j)|^2)^{1-q}}{(1-|\phi\circ\sigma_{b_j}(a_j)|^2)^{1-p}}\int_{J(a_j)}
\left(\frac{\rho\big(\phi\circ\sigma_{a_j}
(\xi),\phi\circ\sigma_{a_j}(b_j)\big)^2}{(2\pi)^{-1}|J(a_j)|}\right)\,|d\xi|\\
&&\ge 2^{-3}\epsilon_0\left(\frac{\min\{1,(1-s_0^2)^{1-p}\}}{(1-s^2)^{1-p}}\right),
\end{eqnarray*}
a contradiction. In other words, (\ref{eee}) must be true under (\ref{e2}) being valid.
\end{proof}

\begin{proof}[Proof of Theorem \ref{tB}] Suppose that $C_\phi:\mathcal{CA}_p\mapsto\mathcal{CA}_q$ is compact. Of course, this operator is bounded, and thus (\ref{e1}) holds. Choosing $b=\phi(a)$, we see that $f_b$ defined in Lemma \ref{l21} tends to $0$ uniformly on compact subsets of $\mathbb D$ whenever $|b|\to 1$. Thus, $\lim_{|b|\to 1}\|C_\phi f_b\|_{\mathcal{CA}_p,\ast}=0$. As an immediate by-product of the $\mathsf{C}$-part in the proof of Theorem \ref{tA}, we have
$$
0=\lim_{|b|\to 1}\|C_\phi f_b\|_{\mathcal{CA}_p,\ast}^2\gtrsim\lim_{|b|\to 1}\frac{(1-|a|^2)^{1-q}|b|^2}{(1-|b|^2)^{1-p}}\|\sigma_{b}\circ\phi\circ\sigma_a\|_2^2,
$$
whence deriving (\ref{e2}).

Next, we deal with the converse part of Theorem \ref{tB} according to $(p,q)\in [0,2)\times[1,1]$ and $(p,q)\in (1,2)\times[0,2)$. In order to verify that $C_\phi: \mathcal{CA}_p\mapsto\mathcal{CA}_q$ is a compact operator, it suffices to check that $\lim_{n\to\infty}\|C_\phi f_n\|_{\mathcal{CA}_q,\ast}=0$ holds for any sequence $(f_n)_{n=1}^\infty$ in $\mathcal{CA}_p$ with $\|f_n\|_{\mathcal{CA}_p,\ast}\le 1$ and $f_n\to 0$ on compact subsets of $\mathbb D$ as $n\to\infty$.

{\it Situation 1} - assume that (\ref{e1}) holds and (\ref{e2}) is valid for $(p,q)\in [0,2)\times[1,1]$. Upon writing
$$
\|C_\phi f_n\|_{\mathcal{CA}_q,\ast}^2\lesssim\sup_{|\phi(a)|>s}\mathsf{T}(n,a,q)+\sup_{|\phi(a)|\le s}\mathsf{T}(n,a,q),
$$
where
$$
0<s<1\ \&\ \mathsf{T}(n,a,q)=(1-|a|^2)^{1-q}\|f_n\circ\phi\circ\sigma_a-f_n\circ\phi(a)\|_2^2,
$$
we have to control $\sup_{|\phi(a)|>s}\mathsf{T}(n,a,q)$ and $\sup_{|\phi(a)|\le s}\mathsf{T}(n,a,q)$ from above. To do so, set
$$
\begin{cases}
f_{n,a}=f_n\circ\phi\circ\sigma_a-f_n(\phi(a));\\
g_{n,a}=f_n\circ\sigma_{\phi(a)}-f_n(\phi(a));\\
\psi_{a}=\sigma_{\phi(a)}\circ\phi\circ\sigma_a;\\
E(\phi,a,t)=\{\xi\in\mathbb T:\ |\sigma_{\phi(a)}\circ\phi\circ\sigma_a(\xi)|>t\}.
\end{cases}
$$
Using (\ref{e8}) we obtain
\begin{eqnarray*}
&&\sup_{|\phi(a)|>s}\mathsf{T}(n,a,q)\\
&&\approx\sup_{|\phi(a)|>s}(1-|a|^2)^{1-q}\|f_{n,a}\|_2^2\\
&&\lesssim\sup_{|\phi(a)|>s}(1-|a|^2)^{1-q}\|g_{n,a}\|_2^2\|\psi_a\|_2^2\\
&&\lesssim\sup_{|\phi(a)|>s}\frac{(1-|a|^2)^{1-q}}{(1-|\phi(a)|^2)^{1-p}}\|\psi_a\|_2^2\|f_n\|_{\mathcal{CA}_p,\ast}^2\\
&&\lesssim\sup_{|\phi(a)|>s}\frac{(1-|a|^2)^{1-q}}{(1-|\phi(a)|^2)^{1-p}}\|\psi_a\|_2^2,
\end{eqnarray*}
whence getting by (\ref{e2})
\begin{equation}\label{e7e}
\lim_{s\to 1}\sup_{|\phi(a)|>s}\mathsf{T}(n,a,q)=0\quad\forall\quad n=1,2,3,...
\end{equation}
Meanwhile,
$$
\sup_{|\phi(a)|\le s}\mathsf{T}(n,a,q)\lesssim
\sup_{|\phi(a)|\le s}\mathsf{T}_1(n,a,q)+\sup_{|\phi(a)|\le s}\mathsf{T}_2(n,a,q),
$$
where
$$
\begin{cases}
\mathsf{T}_1(n,a,q)=(1-|a|^2)^{1-q}\int_{\mathbb T\setminus E(\phi,a,t)}|f_{n,a}(\xi)|^2\,|d\xi|;\\
\mathsf{T}_2(n,a,q)=(1-|a|^2)^{1-q}\int_{E(\phi,a,t)}|f_{n,a}(\xi)|^2\,|d\xi|.
\end{cases}
$$
Applying Schwarz's lemma to $g_{n,a}$ or using \cite[(3.19)]{Lait1} we get
$$
\sup_{|z|\le t}{|z|^{-1}|g_{n,a}(z)|}\le 2\sup_{|w|\le t}|g_{n,a}(w)|
$$
thereby deriving
\begin{eqnarray*}
&&\sup_{|\phi(a)|\le s}\mathsf{T}_1(n,a,q)\\
&&\lesssim\sup_{|\phi(a)|\le s}(1-|a|^2)^{1-q}\sup_{|w|\le t}|g_{n,a}(w)|^2\int_{\mathbb T}|\psi_a(\xi)|^2\,|d\xi|\\
&&\lesssim\big(1+(1-s)^{p-1}\big)\sup_{|w|\le t}|g_{n,a}(w)|^2\sup_{|\phi(a)|\le s}\frac{(1-|a|^2)^{1-q}}{(1-|\phi(a)|^2)^{1-p}}\|\psi_a\|_2^2\\
&&\to 0\quad\hbox{as}\quad n\to\infty,
\end{eqnarray*}
in which $|\phi(a)|\le s$ and $|w|\le t$ have been used.
Also, a combination of (\ref{e8}), (\ref{e1}) and $q=1$ gives that if
$$
\begin{cases}
\lambda=(a\bar{b}-1)/(1-b\bar{a});\\
\tau=\phi\circ\sigma_a;\\
c=\sigma_b(a);\\
b\in\mathbb D,
\end{cases}
$$
then
\begin{eqnarray*}
&&\|f_{n,a}\|_{\mathcal{CA}_q,\ast}^2\\
&&=\sup_{b\in\mathbb D}(1-|b|^2)^{1-q}\|f_n\circ\tau\circ\sigma_b-f_n\circ\tau(b)\|_2^2\\
&&\lesssim\sup_{b\in\mathbb D}(1-|b|^2)^{1-q}\|f_n\circ\sigma_{\tau(b)}-f_n\circ\tau(b)\|_2^2\|\sigma_{\tau(b)}\circ\tau\circ\sigma_b\|_2^2\\
&&\lesssim\|f_n\|_{\mathcal{CA}_p,\ast}^2\sup_{b\in\mathbb D}\frac{(1-|b|^2)^{1-q}}{(1-|\tau(b)|^2)^{1-p}}\|\sigma_{\tau(b)}\circ\tau\circ\sigma_b\|_2^2\\
&&\lesssim\sup_{c\in\mathbb D}\frac{(1-|\lambda c|^2)^{1-q}}{(1-|\phi(\lambda c)|^2)^{1-p}}\|\sigma_{\phi(\lambda c)}\circ\phi\circ(\lambda\sigma_c)\|_2^2\\
&&\lesssim\sup_{c\in\mathbb D}\frac{(1-|c|^2)^{1-q}}{(1-|\phi(c)|^2)^{1-p}}\|\psi_c\|_2^2\\
&&<\infty,
\end{eqnarray*}
and hence from the Cauchy-Schwarz inequality, Lemmas \ref{l32}-\ref{l31} and $q=1$ it follows that
\begin{eqnarray*}
&&\sup_{|\phi(a)|\le s}\mathsf{T}_2(n,a,q)\\
&&\lesssim\sup_{|\phi(a)|\le s}{(1-|a|^2)^{1-q}}\left(\int_{E(\phi,a,t)}|f_{n,a}(\xi)|^4\,|d\xi|\right)^\frac12|E(\phi,a,t)|^\frac12\\
&&\lesssim\sup_{|\phi(a)|\le s}\left(\frac{(1-|a|^2)^{1-q}}{(1-|\phi(a)|^2)^{p-1}}\int_{\mathbb T}|f_{n,a}(\xi)|^4\,|d\xi|\right)^\frac12\left(\frac{(1-|a|^2)^{1-q}}{(1-|\phi(a)|^2)^{1-p}}|E(\phi,a,t)|\right)^\frac12\\
&&\lesssim\big(1+(1-s^2)^{1-p}\big)^\frac12\sup_{|\phi(a)|\le s}\left(\frac{\|f_{n,a}\|_2^2\|f_{n,a}\|_{\mathcal{CA}_q,\ast}^2}{(1-|a|^2)^{q-1}}
\right)^\frac12
\left(\frac{|E(\phi,a,t)|}{\frac{(1-|\phi(a)|^2)^{1-p}}{(1-|a|^2)^{1-q}}}\right)^\frac12\\
&&\lesssim\big(1+(1-s^2)^{1-p}\big)^\frac12\|f_{n,a}\|_{\mathcal{CA}_q,\ast}^2
\sup_{|\phi(a)|\le s}\left(\frac{(1-|a|^2)^{1-q}}{(1-|\phi(a)|^2)^{1-p}}|E(\phi,a,t)|\right)^\frac12\\
&&\lesssim\left(\frac{\sup_{c\in\mathbb D}\frac{(1-|c|^2)^{1-q}}{(1-|\phi(c)|^2)^{1-p}}\|\psi_c\|_2^2}{\big(1+(1-s^2)^{1-p}\big)^{-\frac12}}\right)
\sup_{|\phi(a)|\le s}\left(\frac{(1-|a|^2)^{1-q}}{(1-|\phi(a)|^2)^{1-p}}|E(\phi,a,t)|\right)^\frac12\\
&&\to 0\quad\hbox{as}\quad t\to 1.
\end{eqnarray*}
Consequently,
\begin{equation}\label{e8e}
\lim_{n\to\infty}\sup_{|\phi(a)|\le s}\mathsf{T}(n,a,q)=0.
\end{equation}
Putting (\ref{e7e}) and (\ref{e8e}) together, we reach $\lim_{n\to\infty}\|C_\phi f_n\|_{\mathcal{CA}_q,\ast}=0$.

{\it Situation 2} - assume that (\ref{e1}) holds and (\ref{e2}) is valid for $(p,q)\in (1,2)\times[0,2)$. Rewriting
\begin{eqnarray*}
&&\|C_\phi f_n\|_{\mathcal{CA}_q,\ast}^2\\
&&\lesssim\sup_{a\in\mathbb D}(1-|a|^2)^{1-q}\int_{\mathbb D}|f_n'(w)|^2N(\phi\circ\sigma_a,w)\,dm(w)\\
&&\le\sup_{a\in\mathbb D}\mathsf{U}(n,a,q,r)+\sup_{a\in\mathbb D}\mathsf{V}(n,a,q,r),
\end{eqnarray*}
where $2^{-1}\le r<1$ and
$$
\begin{cases}
\mathsf{U}(n,a,q,r)=(1-|a|^2)^{1-q}\int_{|\sigma_{\phi(a)}(w)|\le r}|f_n'(w)|^2N(\phi\circ\sigma_a,w)\,dm(w);\\
\mathsf{V}(n,a,q,r)=(1-|a|^2)^{1-q}\int_{|\sigma_{\phi(a)}(w)|>r}|f_n'(w)|^2N(\phi\circ\sigma_a,w)\,dm(w),
\end{cases}
$$
we have to control $\sup_{a\in\mathbb D}\mathsf{U}(n,a,q,r)$ and $\sup_{a\in\mathbb D}\mathsf{V}(n,a,q,r)$ for an appropriate $r\in [2^{-1},1)$. In the sequel, let $b=\phi(a)$.

{\it Sub-situation 1 - estimate for $\sup_{a\in\mathbb D}\mathsf{U}(n,a,q,r)$}. For this, we consider two cases for any given $s\in (0,1)$.

{\it Case $1_1$}: $|b|\le s$. Under this case, $|\sigma_b(w)|\le r$ ensures that $w$ belongs to a compact subset $K$ of $\mathbb D$, and therefore, it follows from $f_n\to 0$ on any compact subset of $\mathbb D$ and (\ref{e10}) that $\lim_{n\to\infty}\sup_{w\in K}|f_n'(w)|=0$ and consequently,
\begin{eqnarray*}
&&\lim_{n\to\infty}\sup_{|b|\le s}(1-|a|^2)^{1-q}\int_{|\sigma_b(w)|\le r}|f_n'(w)|^2N(\phi\circ\sigma_a,w)\,dm(w)\\
&&=\lim_{n\to\infty}\sup_{|b|\le s}\frac{(1-|a|^2)^{1-q}}{(1-|b|^2)^{1-p}}(1-|b|^2)^{p-1}\int_{|\sigma_b(w)|\le r}|f_n'(w)|^2N(\phi\circ\sigma_a,w)\,dm(w)\\
&&\lesssim\Big(\lim_{n\to\infty}\sup_{w\in K}|f_n'(w)|^2\Big)\sup_{|b|\le s}\frac{(1-|a|^2)^{1-q}}{(1-|b|^2)^{1-p}}\int_{|\sigma_b(w)|\le r}N(\phi\circ\sigma_a,w)\,dm(w)\\
&&\lesssim\Big(\lim_{n\to\infty}\sup_{w\in K}|f_n'(w)|^2\Big)\sup_{|b|\le s}\frac{(1-|a|^2)^{1-q}}{(1-|b|^2)^{1-p}}\int_{\mathbb D}N(\sigma_b\circ\phi\circ\sigma_a,z)\,dm(z)\\
&&\lesssim\Big(\lim_{n\to\infty}\sup_{w\in K}|f_n'(w)|^2\Big)\sup_{|b|\le s}\frac{(1-|a|^2)^{1-q}}{(1-|b|^2)^{1-p}}\|\sigma_b\circ\phi\circ\sigma_a\|_2^2\\
&&=0.
\end{eqnarray*}

{\it Case $1_2$}: $|b|>s$. Using (\ref{e5a}) we get
\begin{eqnarray*}
&&\sup_{|b|>s}(1-|a|^2)^{1-q}\int_{|\sigma_b(w)|\le r}|f_n'(w)|^2N(\phi\circ\sigma_a,w)\,dm(w)\\
&&\lesssim\|f_n\|^2_{\mathcal{CA}_p,\ast}\sup_{|b|>s}(1-|a|^2)^{1-q}\int_{|\sigma_b(w)|\le r}N(\sigma_b\circ\phi\circ\sigma_a,\sigma_b(w))\,\frac{dm(w)}{(1-|w|^2)^{3-p}}\\
&&\lesssim\sup_{|b|>s}(1-|a|^2)^{1-q}\int_{|z|\le r}\big(1-|\sigma_b(z)|^2\big)^{p-1}N(\sigma_b\circ\phi\circ\sigma_a,z)\,\frac{dm(z)}{(1-|z|^2)^{2}}
\\
&&\lesssim\sup_{|b|>s}\frac{(1-|a|^2)^{1-q}}{(1-|b|^2)^{1-p}}\int_{|z|\le r}N(\sigma_b\circ\phi\circ\sigma_a,z)\,\frac{dm(z)}{(1-|z|^2)^2}\\
&&\lesssim(1-r^2)^{-2}\sup_{|b|>s}\frac{(1-|a|^2)^{1-q}}{(1-|b|^2)^{1-p}}\|\sigma_b\circ\phi\circ\sigma_a\|_2^2\\
&&\to 0\quad\hbox{as}\quad s\to 1.
\end{eqnarray*}

Putting the above two cases together, we see that for any $\epsilon\in (0,1)$ there are two real numbers: $r_0\in [2^{-1},1)$; $s_0\in (0,1)$, and a natural number $n_0$ such that $n\ge n_0$
\begin{equation}\label{eUe}
\sup_{a\in\mathbb D}\mathsf{U}(n,a,q,r_0)\le\sup_{|b|\le s_0}\mathsf{U}(n,a,q,r_0)+\sup_{|b|> s_0}\mathsf{U}(n,a,q,r_0)<\epsilon.
\end{equation}

{\it Sub-situation 2 - estimate for $\sup_{a\in\mathbb D}\mathsf{V}(n,a,q,r)$}. Like {\it Sub-situation 1}, two treatments are required.

{\it Case $2_1$}: $|b|\le s$. For this case, we need the following by-product of \cite[Lemma 2.1]{Smith}: if $\psi$ is an analytic self-map of $\mathbb D$ with $\psi(0)=0$ then
\begin{equation}\label{esmith}
\mathsf{W}=\sup_{0<|w|<1}|w|^2N(\psi,w)<\infty\Longrightarrow\sup_{2^{-1}\le |w|<1}\frac{N(\psi,w)}{\ln|w|^{-1}}\le 4(\ln 2)^{-1}\mathsf{W}.
\end{equation}
Note that (\ref{e1}) and (\ref{e2}) imply respectively
\begin{equation}\label{e31ae}
\sup_{a\in\mathbb D}\frac{(1-|a|^2)^{1-q}}{(1-|b|^2)^{1-p}}\sup_{0<|w<1}|w|^2N(\sigma_b\circ\phi\circ\sigma_a,w)<\infty
\end{equation}
and
\begin{equation}\label{e31aee}
\lim_{|b|\to 1}\frac{(1-|a|^2)^{1-q}}{(1-|b|^2)^{1-p}}\sup_{0<|w<1}|w|^2N(\sigma_b\circ\phi\circ\sigma_a,w)=0
\end{equation}
thanks to the following (\ref{e9})-based mean value estimate for $N(\sigma_w\circ\sigma_b\circ\phi\circ\sigma_a,0)$ where $0<|w|<1$ (cf. \cite[(2.9)]{Lait1}):
\begin{eqnarray*}
&&|w|^2N(\sigma_b\circ\phi\circ\sigma_a,w)\\
&&=|w|^2N(\sigma_w\circ\sigma_b\circ\phi\circ\sigma_a,0)\\
&&\lesssim\int_{|z|<|w|}N(\sigma_w\circ\sigma_b\circ\phi\circ\sigma_a,z)\,dm(z)\\
&&\lesssim\int_{\mathbb D}N\big(\sigma_b\circ\phi\circ\sigma_a,\sigma_w(z)\big)\,dm(z)\\
&&\approx\int_{\mathbb D}|\sigma_w'(z)|^2N\big(\sigma_b\circ\phi\circ\sigma_a,z\big)\,dm(z)\\
&&\approx\|\sigma_w\circ\sigma_b\circ\phi\circ\sigma_a-\sigma_w\circ\sigma_b\circ\phi\circ\sigma_a(0)\|_2^2\\
&&\lesssim\|\sigma_{b}\circ\phi\circ\sigma_a\|_2^2.
\end{eqnarray*}
Thus, a combination of (\ref{esmith})-(\ref{e31ae})-(\ref{e31aee}) and H\"older's inequality gives
\begin{eqnarray*}
&&\sup_{|b|\le s}(1-|a|^2)^{1-q}\int_{|\sigma_b(w)|>r}|f_n'(w)|^2N(\phi\circ\sigma_a,w)\,dm(w)\\
&&\approx\sup_{|b|\le s}(1-|a|^2)^{1-q}\int_{|\sigma_b(w)|>r}|f_n'(w)|^2N\big(\sigma_{b}\circ\phi\circ\sigma_a,\sigma_b(w)\big)\,dm(w)\\
&&\lesssim\sup_{|b|\le s}(1-|b|^2)^{1-p}\int_{|\sigma_b(w)|> r}|f_n'(w)|^2N(\sigma_b,w)\,dm(w)\\
&&\lesssim\Big(1+(1-s^2)^{1-p}\Big)\int_{|\sigma_b(w)|>r}|f'_n(w)|^2N(\sigma_b,w)\,dm(w)\\
&&\lesssim\Big(1+(1-s^2)^{1-p}\Big)\int_{|z|>r}|(f_n\circ\sigma_b)'(z)|^2(1-|z|^2)\,dm(z)\\
&&\lesssim\Big(1+(1-s^2)^{1-p}\Big)\left(\frac{\int_{|z|>r}|(f_n\circ\sigma_b)'(z)|^4(1-|z|^2)^{4-p}\,dm(z)}
{\left(\int_{|z|>r}(1-|z|^2)^{p-2}\,dm(z)\right)^{-1}}\right)^{1/2}.
\end{eqnarray*}
Since $\|f_n\|_{\mathcal{CA}_p,\ast}\le 1$ and $|b|\le s<1$ ensure $\|f_n\circ\sigma_b\|_{\mathcal{CA}_p,\ast}\lesssim 1$, one concludes that $|(f_n\circ\sigma_b)'(z)|^2(1-|z|^2)dm(z)$ is $p$-Carleson measure with norm relying on $s$ and so that $d\mu_n(z)=|(f_n\circ\sigma_b)'(z)|^2(1-|z|^2)^{4-p}dm(z)$ is $3$-Carleson measure with norm relying on $s$. Now, it follows from \cite[Theorem 1.2]{Stegenga} that
\begin{eqnarray*}
&&\int_{|z|>r}|(f_n\circ\sigma_b)'(z)|^4(1-|z|^2)^{4-p}\,dm(z)\\
&&=\int_{|z|>r}|(f_n\circ\sigma_b)'(z)|^2\,d\mu_n(z)\\
&&\lesssim\|\mu_n\|_{\mathcal{CM}_3}\int_{\mathbb D}|(f_n\circ\sigma_b)'(z)|^2(1-|z|^2)\,dm(z)\\
&&\lesssim \|f_n\|_{\mathcal{CA}_p,\ast}^4\\
&&\lesssim 1.
\end{eqnarray*}
Note that
$$
\lim_{r\to 1}\int_{|z|>r}(1-|z|^2)^{p-2}\,dm(z)=0.
$$
So
$$
\lim_{r\to 1}\sup_{|b|\le s}(1-|a|^2)^{1-q}\int_{|\sigma_b(w)|>r}|f_n'(w)|^2N(\phi\circ\sigma_a,w)\,dm(w)=0
$$
holds for any $n=1,2,3,...$.

{\it Case $2_2$}: $|b|>s$. Since (\ref{e31aee}) implies that for any $\epsilon\in (0,1)$ there is an $s_0\in (0,1)$ such that
$$
|b|>s_0\Longrightarrow \frac{(1-|a|^2)^{1-q}}{(1-|b|^2)^{1-p}}\sup_{0<|w|<1}|w|^2N(\sigma_b\circ\phi\circ\sigma_a,w)<\epsilon.
$$
Thus, (\ref{esmith}) is applied once again to deduce that
\begin{eqnarray*}
&&N(\phi\circ\sigma_a,w)\\
&&=N\big(\sigma_b\circ\phi\circ\sigma_a,\sigma_b(w)\big)\\
&&\lesssim \frac{\epsilon(1-|b|^2)^{1-p}}{(1-|a|^2)^{1-q}}\ln|\sigma_b(w)|^{-1}\\
&&\approx\frac{\epsilon(1-|b|^2)^{1-p}}{(1-|a|^2)^{1-q}}N(\sigma_b,w)\\
&&\hbox{as}\quad |\sigma_b(w)|>r>2^{-1}.
\end{eqnarray*}
Consequently,
\begin{eqnarray*}
&&\sup_{|b|> s_0}(1-|a|^2)^{1-q}\int_{|\sigma_b(w)|>r}|f_n'(w)|^2N(\phi\circ\sigma_a,w)\,dm(w)\\
&&\lesssim \epsilon\sup_{|b|>s_0}(1-|b|^2)^{1-p}\int_{|\sigma_b(w)|>r}|f_n'(w)|^2N(\sigma_b,w)\,dm(w)\\
&&\lesssim\epsilon\|f_n\|^2_{\mathcal{CA}_p,\ast}\\
&&\lesssim\epsilon.
\end{eqnarray*}
The previous discussions on {\it Case $2_1$} and {\it Case $2_2$}  indicate
\begin{equation}\label{eVe}
\lim_{n\to\infty}\sup_{a\in\mathbb D}\mathsf{V}(n,a,q,r)=0.
\end{equation}
Obviously, (\ref{eUe}) and (\ref{eVe}) give
$\lim_{n\to\infty}\|C_\phi f_n\|_{\mathcal{CA}_q,\ast}=0$.

\end{proof}

\begin{proof}[Proof of Corollary \ref{cor2}] This follows from (\ref{e5}), Theorem \ref{tB} and \cite[Theorem 1.4 (c)]{Xiao}.
\end{proof}

\end{document}